\documentclass[reqno]{amsart}

\usepackage{amsfonts,amsthm,amsmath,amssymb,amscd,mathrsfs,color}
\usepackage{graphicx}
\usepackage{url}
\usepackage{dsfont}
\usepackage{MnSymbol}
\newtheorem{theorem}{Theorem}
\newtheorem{proposition}[theorem]{Proposition}
\newtheorem{lemma}[theorem]{Lemma}
\newtheorem{definition}{Definition}
\newtheorem{corollary}{Corollary}
\newtheorem{example}{Example}
\newtheorem{remark}{Remark}
\newtheorem{notation}{Notation}
\newtheorem{cor}{Corollary}

\newcommand{\bR}{\mathbb{R}}
\newcommand{\bC}{\mathbb{C}}
\newcommand{\ee}{\end{equation}}

\newcommand {\ga}{\gamma}
\newcommand {\Ga}{\Gamma}

\newcommand {\bCP} {\mathbb {CP}}

\newtheorem{thm}{Theorem}[section]

\theoremstyle{definition}

\theoremstyle{remark}
\newtheorem{rem}[thm]{Remark}
\theoremstyle{definition}

\theoremstyle{definition}

\theoremstyle{definition}

\begin{document}
          \numberwithin{equation}{section}

          \title[Plane rectifiable curves: old and new] 
          {Plane rectifiable curves: old and new} 

          \author[B.~Shapiro]{Boris Shapiro}
\address{Department of Mathematics, Stockholm University, SE-106 91, Stockholm,
            Sweden}
\email{shapiro@math.su.se}

\author[G.~Tahar]{Guillaume Tahar}
\address{Beijing Institute of Mathematical Sciences and Applications, Huairou
District, Beijing, China}

\email{guillaume.tahar@bimsa.cn}

\begin{abstract}
In this note we recall the classical notion of an algebraically rectifiable plane curve going back to J.~A.~Serret, E.~Laguerre and G.~Humbert. We provide new criteria of algebraic rectifiability, relate this notion to quadratic differentials, and generalize it to differentials of higher order.
\end{abstract}

\maketitle

\section{Introduction}
\subsection{Historical background}  
One of the fundamental problems in mathematics considered since the time of Archimedes and Apollonius is calculation of the length of an arc of a curve in two and three dimensions. In the 17-th century a  great progress in this area has been achieved  by such celebrities as E.~Torricelli, J.~Wallis, Ch.~Wren, G.~Leibniz, H.~van Heuraut, P. de Fermat, see e.g. \cite {Ka}.

 However there are not that many cases in which  the arc length can be found explicitly and typically, for a (real) algebraic curve, its arc length does not  depend algebraically on the endpoints of the arc. The fundamental example illustrating this phenomenon is  the  length of an arc of an ellipse whose study  led to the development of the theory of elliptic functions by C.~G.~Jacobi. 
  Apparently, historically first example of an algebraic curve whose arc length is an algebraic function of the endpoints is the semicubic parabola $y^2=k^2x^3$ where $k$ is a fixed real number.  It was discovered by W.~Niele in mid 17-th century and provides one of the few explicit examples of calculation of the length of a curve in the undergraduate calculus course.

 In the present paper we will mainly be interested in  the notion of an \emph{algebraically rectifiable curve} which is a (real) algebraic curve whose arc length  is an algebraic function of the endpoints. It is not quite clear who actually introduced it, but G.~Humbert \cite{Hu} quotes  J.~A.~Serret whose textbook "Cours d'alg\'ebra sup\'eriore" was very popular in the 1860's and later.
E.~Laguerre was definitely familiar with this notion and, in our opinion, it might as well be that I.~Newton, G.~Leibniz and L.~Euler thought about this problem.  In any case, Newton considered rather similar  algebraically integrable curves, comp. \cite{AV}.

The natural connection between the algebraic rectifiability and  evolutes was discovered in \cite {Hu}. (Originally, evolutes and involutes were in details studied by Ch.~Huygens, see \cite {Huy}. For the basic notions and results on evolutes see e.g. \cite {Hi}.) One of the fundamental results of \cite {Hu}, see p. 136, claims the following.

\begin{proposition}\label{prop:Hu} A plane algebraic curve is algebraically rectifiable if and only if it is the evolute of an algebraic curve.
\end{proposition}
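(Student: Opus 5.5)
The plan is to use the classical string (involute) construction in both directions, and to handle algebraicity by working with a local arc-length parameter along an analytic branch. Throughout, "algebraically rectifiable" will mean that on each branch the arc length $s$, measured from a base point, satisfies a polynomial relation $F(x,y,s)=0$ with $F\not\equiv 0$ in $s$. Equivalently, the triple $(x,y,s)$ sweeps out an algebraic curve in $\bR^3$ (or $\bC^3$) that projects onto the given curve.

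\emph{Evolute $\Rightarrow$ rectifiable.} Let $\Gamma$ be algebraic with evolute $E$. Then $E$ is the envelope of the normals of $\Gamma$. Parametrize $\Gamma$ by $p(t)$ with unit normal $n(t)$ and curvature radius $\rho(t)$. The center of curvature is $c(t)=p(t)+\rho(t)n(t)$, and differentiating gives $c'(t)=\rho'(t)n(t)$, by the Frenet formulas. Hence the arc length of $E$ between $c(t_0)$ and $c(t_1)$ equals $\pm(\rho(t_1)-\rho(t_0))$ away from vertices. The quantity $\rho^2=(x'^2+y'^2)^3/(x'y''-y'x'')^2$ is a rational function on $\Gamma$, and so is $c$. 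The map $\Gamma\to \bR^3$, $t\mapsto (c(t),\rho(t))$, therefore has algebraic image, since $\rho$ is algebraic over the function field of $\Gamma$. This shows that arc length on $E$ is algebraic in the point of $E$. A degenerate case must be excluded: if $\Gamma$ is a line or circle, $E$ is a point or empty.

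\emph{Rectifiable $\Rightarrow$ evolute.} Let $E$ be algebraic and algebraically rectifiable, with local branch $q(s)$ parametrized by arc length and unit tangent $T(s)=q'(s)$. Define the involute $p(s)=q(s)-(s-a)T(s)$ for a constant $a$. By the string construction, $p'(s)=-(s-a)T'(s)$ is normal to $T$. So the tangent lines of $E$ are the normals of the curve $\Gamma=\{p(s)\}$, and $E$ is the evolute of $\Gamma$, assuming $E$ is not a line so that $T'\neq 0$. To see that $\Gamma$ is algebraic, take $(x,y)\in E$. The unit tangent is $T=(-F_y,F_x)/\sqrt{F_x^2+F_y^2}$, where $F$ is the defining polynomial of $E$, so $T$ is algebraic over $\bR(x,y)$. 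By hypothesis $s$ is algebraic over $\bR(x,y)$ as well. Hence the coordinates of $p$ lie in a finite algebraic extension of the function field of $E$. The curve $\Gamma$ is then the image of an algebraic curve, namely the normalization of the curve defined by $(x,y,s,T)$, under a rational map. It is therefore algebraic, and since $E$ is not a line, $\Gamma$ is a curve rather than a point.

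\emph{Main obstacle.} The hard step is making the global algebraicity statements rigorous. The arc length and the square root in $T$ are multivalued. Analytic continuation of a branch of $s$ around loops in the complexified curve may change $s$ by constants, and the square root may flip sign, so $s\mapsto \pm s + \text{const}$. I would treat this by noting that the Zariski closure of a single real-analytic branch of $\{(x,y,s)\}$ is an irreducible algebraic curve $\tilde E$, because algebraicity of $s$ means a nonzero polynomial relation holds on the branch. The involute is then the image of $\tilde E$, or of a double cover accounting for the sign of the square root, under the rational map above. That image is an irreducible algebraic curve. It may contain several involutes (the parallel family obtained by varying $a$) as components of the same algebraic curve, which is harmless for the statement. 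Finally, I would check that $ds=\sqrt{dx^2+dy^2}$, restricted to $E$, is the exact differential $d\rho$ in the converse direction, which confirms that the two constructions are inverse to each other.
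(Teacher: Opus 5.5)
Your proposal is correct and follows essentially the same route as the paper, which proves this statement as the equivalence (1)$\Leftrightarrow$(4) of Theorem~\ref{thm:involutes-exact} by passing through algebraic involutes. In particular, your construction $p=q-(s-a)T$ with $s$ and $T$ algebraic is exactly the paper's argument for the direction rectifiable $\Rightarrow$ evolute.

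For the converse the mechanism differs slightly. The paper regards $\Gamma$ as an involute of $E$ and divides $(S+C)T=r-I_C$ by a non-vanishing coordinate of $T$. You instead use the Frenet identity $c'=\rho' n$ to get $s_E=\pm\rho+\mathrm{const}$, with $\rho^2$ rational on $\Gamma$. This is the same fact made explicit, and it also justifies the paper's unproved claim that $\Gamma$ is locally an involute of its evolute.
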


Description of special classes of algebraically rectifiable curves can be found in  \cite{SaFa}. 

\medskip
\noindent

\subsection{Our results}
The aim of this paper is to recast the classical problem of algebraic
rectifiability in the language of meromorphic differentials on the
normalization of an algebraic curve.  To an algebraic plane curve
$\Gamma$ we attach the quadratic differential
\[
        ds^2=dx^2+dy^2
\]
on its normalization.  This elementary object records the algebraic
behaviour of the Euclidean length element, and its divisor is controlled by
isotropic tangencies and by the branches at infinity.  In particular, for a
generic rational curve of degree $d$ the zero divisor has degree $4d-4$,
while the poles occur at the points at infinity, each with order four.
This gives a useful bridge between classical projective geometry and the
strata of meromorphic quadratic differentials.

A first contribution is an intrinsic realization criterion.  Given a compact
real Riemann surface $(X,\rho)$ and a $\rho$-real quadratic differential
$q$, we show that $q$ is of the form $dx^2+dy^2$ for a real meromorphic
plane map if and only if it admits an exact admissible splitting
\[
        q=\omega\,\rho^*\overline{\omega},
        \qquad  \omega=dz .
\]
Thus the realization problem separates into a divisor-theoretic condition
and a period condition.  We also prove the corresponding uniqueness
statement: once the splitting, equivalently the divisor of $dz$, is fixed,
the realization is unique up to a direct Euclidean motion, and up to a
Euclidean similarity if the differential is multiplied by a positive
constant.

A second main point is the exactness criterion for higher order
differentials.  For a meromorphic $k$-differential $\phi$ on a compact
Riemann surface, we prove that exactness of the canonical $k$-cover is
equivalent to algebraicity of the Abelian integral $\int\phi^{1/k}$, and
also to the representation
\[
        \phi=f\left(\frac{df}{f}\right)^k
\]
for a meromorphic function $f$.  This gives a convenient uniform language
for rectifiability and for its affine analogue.  In the quadratic case it
recovers the algebraic arc-length condition.

We then relate this differential viewpoint to Humbert's classical theorem.
We prove that for a non-linear plane algebraic curve the following
conditions are equivalent: algebraic rectifiability, exactness of the
canonical double cover of $ds^2$, algebraicity of the involutes, and being
the evolute of an algebraic curve.  Thus Humbert's evolute criterion is
precisely the geometric form of the exactness criterion for the arc-length
quadratic differential.

The paper also contains several consequences and examples.  We show that a
non-linear rational real plane curve whose arc-length quadratic differential
has only two singularities must be a circle, up to Euclidean similarity and
reparametrization.  We compute the corresponding differentials for conics,
the Bernoulli lemniscate, the semicubic parabola and several rational
examples, and locate them in the appropriate strata.  Finally, we discuss
the analogous construction in special affine geometry, where the Euclidean
quadratic differential is replaced by the affine cubic differential
$(\gamma'\wedge\gamma'')dt^3$, and obtain the parallel exactness criterion
for algebraic affine involutes.  Conceptually, the paper studies when the Euclidean metric restricted to an algebraic curve becomes algebraically integrable.

\medskip
Let us also mention several closely related modern developments. Farouki and Sakkalis proved that, except for straight lines, real rational curves cannot be parametrized by rational functions of arc length, and subsequently studied algebraically rectifiable parametric curves, including low-degree classifications and links with Pythagorean-hodograph curves \cite{FaSa91,SaFa}.  The latter theory has become a substantial subject in geometric design; see, for example, the survey \cite{KoLa14} and the monograph \cite{Fa08}.  More recently, Solynin and Solynin revisited the interpretation of real algebraic curves via meromorphic quadratic differentials, following the circle of ideas of Langer and Singer \cite{LaSi1,SoSo22}.

\medskip 
The structure of this note is as follows. In Section~\ref{sec:arc}, we introduce the arc length form and the associated quadratic differential and record their basic divisor properties. In Section~\ref{sec:which} we study which quadratic differentials on a given Riemann surface can be realized as arc length differentials and how unique such a realization is. Section~\ref{sec:exact} discusses exact quadratic and higher order differentials, which provide a natural language for algebraic rectifiability. Section~\ref{sec:main} provides characterizations of plane rectifiable algebraic curves. Section~\ref{sec:examp} gives examples, including conics, the Bernoulli lemniscate and semicubic parabolas. Section~\ref{sec:aff} sketches the analogous questions in affine differential geometry, and the appendix recalls the Pl\"ucker theorems used for generic curves and their evolutes.


\section{Arc length $1$-form and quadratic differential} \label{sec:arc}

The following notions appeared in different contexts and should be better known.  We  rediscovered them as well before finding   relevant discussions in \cite{LaSi1,LaSi2,LaSi3, La}. The latter texts contain connections of these objects  with foci, the Schwarz derivative, the matrix range, etc.

\begin{definition}  For a plane algebraic curve $\Ga \subset \bC^2$ with the affine coordinates $(x,y)$ given by the equation $P(x,y)=0$, we define its {\bf arc length $1$-form}  as:  $$ds=\frac{\sqrt{(P_x^{\prime})^2+(P_y^\prime)^2}}{P^\prime_y}dx$$
 and we define its  {\bf arc length quadratic differential} as: $$ds^2:=(ds)^{\otimes 2}=\frac{(P_x^{\prime})^2+(P_y^\prime)^2}{(P^\prime_y)^2}dx^2.$$
  \end{definition}

  \begin{notation}
 Given a plane algebraic curve $\Ga \subset \bC^2$, denote by $\overline \Ga \subset \bC P^2$ its projective closure and by $\widetilde \Ga$ the normalization of $\overline \Ga$. Denote by $\Pi: \widetilde \Ga \dashedrightarrow \overline\Ga\subset \bC P^2$ the map sending the normalization to the projective closure.
  \end{notation}

\begin{lemma}\label{lm:basic}  In the above notation, the differential $ds^2$ is a well-defined meromorphic quadratic differential on the Riemann surface $\widetilde \Ga$.   The differential  $ds^2$  is the pullback of the standard meromorphic quadratic differential $dS^2:=dx^2+dy^2$  defined on $\bC P^2$ under the birational map $\Pi$.
\end{lemma}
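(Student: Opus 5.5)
The plan is to prove the second assertion first and derive the first from it. On $\bC P^2$ with affine coordinates $(x,y)$, the expression $dS^2=dx^2+dy^2$ is a meromorphic symmetric quadratic form: it is regular on $\bC^2$, and in the other charts it has poles along the line at infinity. For instance, with $x=1/u$, $y=v/u$ we get $dx=-du/u^2$ and $dy=(u\,dv-v\,du)/u^2$, so $dS^2$ has a pole of order four along $u=0$. Since $\Pi:\widetilde\Ga\to\bC P^2$ is a holomorphic map from a compact Riemann surface (the normalization map extends holomorphically over the finitely many singular points and the points at infinity), $\Pi^*dS^2=(d(x\circ\Pi))^2+(d(y\circ\Pi))^2$ is well defined. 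Here $x\circ\Pi$ and $y\circ\Pi$ are meromorphic functions on $\widetilde\Ga$, so their differentials are meromorphic $1$-forms and the sum of their squares is a meromorphic quadratic differential. I assume $\Ga$ is not a vertical line (otherwise swap the roles of $x$ and $y$), so that this pullback is not identically zero or undefined.

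It then suffices to identify $\Pi^*dS^2$ with the formula defining $ds^2$. On the curve, $P(x,y)=0$ gives $P'_x\,dx+P'_y\,dy=0$. At the points of the smooth affine part where $P'_y\neq 0$, this yields $dy=-(P'_x/P'_y)\,dx$, and therefore
\[
dx^2+dy^2=\frac{(P'_x)^2+(P'_y)^2}{(P'_y)^2}\,dx^2 .
\]
This is exactly the definition of $ds^2$, with $P'_x,P'_y$ read as rational functions on $\widetilde\Ga$ via $\Pi$. Since $\Ga$ is not a vertical line and $P$ is taken reduced (irreducible, for the normalization to be a single Riemann surface), $P'_y$ does not vanish identically on $\Ga$. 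So the equality holds on a nonempty open subset of $\widetilde\Ga$. Both sides are meromorphic on the connected surface $\widetilde\Ga$, so the identity theorem gives equality everywhere. This proves that $ds^2=\Pi^*dS^2$ and simultaneously that $ds^2$ extends to a global meromorphic quadratic differential.

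The only delicate point is the well-definedness near singular points of $\overline\Ga$ and at infinity, where the defining formula in terms of $P$ is degenerate. The pullback description handles this automatically. At a branch through such a point, take a local uniformizer $t$ on $\widetilde\Ga$ with $x(t)$ and $y(t)$ meromorphic; then $ds^2=(x'(t)^2+y'(t)^2)\,dt^2$ is manifestly meromorphic. I would also remark that $ds$ itself is in general only defined up to sign, that is, on a double cover, which is why the lemma is stated for $ds^2$.
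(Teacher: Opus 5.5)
Your proof is correct and follows the paper's route: implicit differentiation $dy=-(P'_x/P'_y)\,dx$ on the smooth affine locus, then meromorphic extension to $\widetilde\Ga$. You justify the extension more cleanly than the paper's appeal to chart-independence, by noting that both sides are built from the meromorphic functions $x\circ\Pi$, $y\circ\Pi$ and applying the identity theorem. One small slip: the vertical line has to be excluded because $P'_y\equiv 0$ makes the defining formula meaningless, not because the pullback degenerates; the pullback is $dy^2$ there, and it vanishes identically only on the isotropic lines $y=\pm ix+c$.
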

\begin{proof}
The assertion follows by computing the pullback of the Euclidean quadratic form along a local parametrization of the normalization. On the smooth affine locus, implicit differentiation gives $dy=-(P_x/P_y)dx$, hence
\[
 dx^2+dy^2=\frac{P_x^2+P_y^2}{P_y^2}dx^2.
\]
This expression is independent of the local affine chart and therefore extends meromorphically to the normalization.
\end{proof}

\begin{remark} The number of  poles of $ds^2$ minus the number of its zeros equals $2 \chi(\widetilde\Ga)$. (We count both poles and zeros with their multiplicities).
\end{remark}

Let us now describe the singular points of  $ds^2$.
Denote by $D_0$ (resp. $D^\infty$)  the divisor of its zeros (resp. poles) on the normalization $\widetilde \Ga$.

\medskip
We need the following classical definition (see short introduction in \cite{PiSh}, \cite{Ri} and further references therein). One can easily check  that any circle in $\bR^2$ (or $\bC^2$) with coordinates $(x,y)$ intersects the  line at infinity at  two points $I$ and $J$ given by $(1,i,0)$ and $(1,-i,0)$ respectively in the homogeneous coordinates $(X:Y:Z)$ such that $x=X/Z$ and $y=Y/Z$.  (Here $i$ is the imaginary unity).

\medskip 
The points $I$ and $J$ are called  the \emph{circular points at infinity}. Lines in $\bC P^2\supset \bC^2$ passing though  either $I$ or $J$, and different from the line at infinity, are called \emph{circular} or \emph{isotropic lines}.  In the 1830s generalizing the properties of foci of a quadric,  J.~Pl\"ucker introduced the notion of a \emph{focus} of an arbitrary algebraic curve $\ga \subset \bC P^2$ as the intersection point of two tangents to $\ga$ one of which passes through $I$ and another through $J$. If neither $I$ nor $J$ lie on $\ga$, then its foci are called \emph{ordinary} and their number (counting multiplicity)  equals to the square of the class of $\ga$. For a real curve $\ga$,  only class-many of its foci can be real, the remaining are complex. (See more details in \cite{Ri}.) A tangent line to $\ga$ passing through either $I$ or $J$ will be called \emph{circular}. Denote by $\mathfrak D_\ga\subset \ga$ the divisor of all points of contact with $\ga$ of the circular tangent lines to $\ga$. Typically, $\mathfrak D$ consists of twice the class of $\ga$ smooth points of $\ga$.

Notice that the tangent line is well-defined at each smooth point of $\ga$. Lines passing through singular points of $\ga$ are called \emph{tangent} if they represent points of the dual curve $\ga^\ast$.

\begin{proposition}\label{prop:divisor} For a  generic plane algebraic curve $\Ga\subset \bC^2$, the divisor  $D_0$ of zeros of $ds^2$ coincides with the pullback  $\Pi^{-1}(\mathfrak D_\Ga)$  and the divisor $D^\infty$ of poles of $ds^2$ coincides with $\Pi^{-1}(\Ga^\infty)$ where $\Ga^\infty$ is the divisor of $\overline \Ga$  at infinity. Thus, if $\Ga$ is a generic curve of degree $d$, then $|D_0|=2d(d-1)$ and $D^\infty$ consists of $d$ distinct points with certain multiplicity, see below. For a generic rational curve $\Ga$ of degree $d$, $|D_0|=4d-4$ and $D^\infty$ consists of $d$ distinct points where each pole has  multiplicity $4$.
\end{proposition}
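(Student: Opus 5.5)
Work locally on the normalization: near a point $p\in\widetilde\Ga$, take a local parameter $t$ and write $\Pi(t)=(x(t),y(t))$ in affine coordinates when $\Pi(p)$ is finite. Then
\[
ds^2=(x'^2+y'^2)\,dt^2=(x'+iy')(x'-iy')\,dt^2 .
\]
For a generic curve there are no cusps, so $(x',y')\neq(0,0)$ at every finite point. Hence $ds^2$ vanishes at a finite point $p$ exactly when the tangent direction $(x':y')$ equals $(1:\pm i)$, that is, when the tangent line passes through $I$ or $J$. At a simple circular tangency, $x'\pm iy'$ has a simple zero, and the other factor is nonzero. So the vanishing order is $1$, which matches the multiplicity with which $p$ appears in $\Pi^{-1}(\mathfrak D_\Ga)$. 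This gives $D_0=\Pi^{-1}(\mathfrak D_\Ga)$ on the affine part.

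At the points over infinity, genericity means the line at infinity meets $\overline\Ga$ transversally in $d$ distinct points, none of them $I$ or $J$. Near such a point take $t$ with $x=a/t+O(1)$ and $y=b/t+O(1)$, where $(a:b)$ is the asymptotic direction and $a^2+b^2\ne 0$ because the point is not circular. Then $x'^2+y'^2=(a^2+b^2)t^{-4}(1+O(t))$, so there is a pole of order exactly $4$ and no extra zeros. This proves $D^\infty=\Pi^{-1}(\Ga^\infty)$ with all multiplicities $4$. The multiplicity $4$ holds for generic curves of any degree, not only rational ones.

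For the counts, I would use two independent routes and check them against each other.
\begin{itemize}
\item \emph{Via the class.} A generic curve of degree $d$ has class $d(d-1)$, so $d(d-1)$ tangents pass through $I$ and $d(d-1)$ through $J$. For a generic curve these contact points are distinct smooth points, giving $|D_0|=2d(d-1)$.
\item \emph{Via the degree of $ds^2$.} The degree of $ds^2$ is $4g-4=-2\chi(\widetilde\Ga)$, in line with the Remark. With a smooth curve, $g=(d-1)(d-2)/2$, we get $|D_0|=4g-4+4d=2d(d-1)$, which agrees.
\end{itemize}

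For a generic rational curve of degree $d$, the curve is nodal with $(d-1)(d-2)/2$ nodes, so by Pl\"ucker its class is $2d-2$. The curve is no longer smooth, but the degree computation still works on $\widetilde\Ga$: $|D_0|=-4+4d=4d-4=2(2d-2)$. This agrees with twice the class.

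The main obstacle is justifying that "generic" really excludes all degenerate situations:
\begin{itemize}
\item cusps;
\item circular tangents at singular points, or isotropic flexes, which would create higher-order zeros;
\item points at infinity that are circular or tangent to the line at infinity.
\end{itemize}
I would handle this by a dimension count: each degeneracy is a proper closed condition in the space of degree-$d$ curves, or in the space of rational parametrizations, using the Pl\"ucker formulas recalled in the appendix. One case needs care. For a generic nodal curve, a circular tangent could in principle occur at a node branch. I would argue this is a codimension-one condition, so it is avoided generically; and even if it occurs, the local computation above on the branch still gives a simple zero that is counted in $\Pi^{-1}(\mathfrak D_\Ga)$.
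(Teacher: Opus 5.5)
Your proposal is correct and follows essentially the same argument as the paper. You use the local formula $ds^2=(x'^2+y'^2)\,dt^2$ on the normalization to place the simple zeros at isotropic tangencies and the order-four poles at the transversal, non-circular points at infinity, and you count zeros as twice the class, namely $d(d-1)$ in general and $2d-2$ in the rational case. Beyond the paper, you cross-check the count with $\deg(ds^2)=4g-4$ and list the excluded degeneracies explicitly (cusps, isotropic flexes, circular tangents at node branches, circular or tangential points at infinity). These additions spell out what the paper's one-line appeal to genericity leaves implicit.
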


\begin{proof}
For a local parametrization $t\mapsto (x(t),y(t))$ of the normalization one has
\[
 ds^2=\bigl(x'(t)^2+y'(t)^2\bigr)dt^2.
\]
Thus zeros occur exactly when the tangent direction is isotropic, i.e. when the tangent line passes through one of the circular points at infinity. For a generic curve all such contacts are simple, and their number equals twice the class of the curve. Poles occur at the points lying over the line at infinity. In the rational generic case the class is $2d-2$, giving $4d-4$ zeros, and a local parameter at each of the $d$ branches at infinity gives a pole of order $4$.
\end{proof}

\begin{corollary} \rm{(i)} For a generic $\Ga$ of degree $d$, the number of poles of $ds^2$ (with multiplicities) minus the number of zeros equals $4d - 2d(d-1)=2 \chi (\Ga)=2(2-2g(\Ga))=4 - 2(d-1)(d-2)$. 

\medskip
\noindent
\rm{(ii)}  For a generic rational $\Ga$ of degree $d$, the number of poles of $ds^2$ (with multiplicities) minus the number of zeros equals $4d - (4d-4)=2 \chi (\Ga)=4$.
\end{corollary}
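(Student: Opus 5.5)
The corollary follows by counting with Proposition~\ref{prop:divisor} and the degree formula for meromorphic quadratic differentials. The remark after Lemma~\ref{lm:basic} gives the general identity: for a meromorphic quadratic differential $q$ on a compact Riemann surface $X$ of genus $g$,
\[
\deg(q)=2(2g-2),
\]
since $q$ is a meromorphic section of $K_X^{\otimes 2}$. Hence the number of poles minus the number of zeros of $ds^2$ equals $-\deg(ds^2)=4-4g=2\chi(\widetilde\Ga)$. I will prove this identity first and then evaluate both sides using the explicit divisors from Proposition~\ref{prop:divisor}, checking that the numbers agree.

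For part (i), I take a generic curve of degree $d$. Such a curve is smooth, so $\widetilde\Ga=\overline\Ga$ and the genus is $g=(d-1)(d-2)/2$. This gives
\[
2\chi=2(2-2g)=4-2(d-1)(d-2).
\]
On the divisor side, Proposition~\ref{prop:divisor} gives $|D_0|=2d(d-1)$, which is twice the class $d(d-1)$. For the poles I will choose a local parameter $t$ near one of the $d$ transverse points at infinity, where $x\sim 1/t$ and $y\sim c/t$ with $1+c^2\ne 0$ because the point is not circular. Then $dx^2+dy^2\sim(1+c^2)t^{-4}dt^2$, so each pole has order $4$ and $|D^\infty|=4d$. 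The difference is
\[
4d-2d(d-1)=-2d^2+6d,
\]
and this equals $4-2(d^2-3d+2)$. So the identity checks out.

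For part (ii), I take a generic rational curve of degree $d$. Here $g=0$, and its class is $2d-2$ by the Plücker formula: for a nodal rational curve, class $=2d+2g-2$. This gives $|D_0|=4d-4$, and the pole computation is the same as in (i), giving $4d$. The difference is then $4=2\chi(\bCP^1)$.

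The only delicate step is the genericity input: that all circular contacts are simple, and that no point at infinity is circular or a point of tangency to the line at infinity. Proposition~\ref{prop:divisor} already assumes this, and I take it as given.
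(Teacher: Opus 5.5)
Your proposal is correct and is essentially the paper's own argument: the paper's proof is just ``straightforward calculation'', i.e.\ plugging the counts from Proposition~\ref{prop:divisor} into $\deg(ds^2)=4g-4$ and the genus formula, exactly as you do. Your explicit check that each of the $d$ transverse non-circular points at infinity gives a pole of order $4$ is a useful addition. Proposition~\ref{prop:divisor} only states this multiplicity in the rational case, whereas part (i) uses $4d$ for general degree $d$; your computation agrees with Lemma~\ref{lm:res}.
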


\begin{proof} Straightforward calculation. \end{proof}

\section{Which quadratic differentials can be induced as arc length?}\label{sec:which}

Given a compact Riemann surface $X$, we now ask which quadratic differentials
on $X$ can be induced as arc length differentials from
$dS^2=dx^2+dy^2$ under birational realizations
\[
   \Pi:X\dashrightarrow \bC P^2
\]
of $X$ as a plane algebraic curve.  In this form the question has two parts:
existence and uniqueness.  The point of this subsection is that the correct
object is not only the quadratic differential $q$, but also a splitting
\[
   q=\omega\,\eta
\]
into two conjugate meromorphic $1$-forms.  In the plane one has
\[
   z=x+iy,\qquad w=x-iy,\qquad q=dx^2+dy^2=dz\,dw,
\]
so the two factors are the pullbacks of the two isotropic differentials
$dz$ and $dw$.

We shall use the following terminology.  Let $X$ be equipped with an
anti-holomorphic involution $\rho$, and let $q$ be a non-zero meromorphic
quadratic differential satisfying
\[
   \rho^*\overline q=q .
\]
A divisor $D$ on $X$ will be called an admissible half-divisor for $q$ if
\[
   D+\rho(D)=\operatorname{div}(q).
\]
An admissible half-divisor is called exact if there exists a meromorphic
$1$-form $\omega$ with
\[
   \operatorname{div}(\omega)=D,\qquad
   q=\omega\,\rho^*\overline{\omega},
\]
and all periods of $\omega$ vanish.  Equivalently, $\omega=dz$ for a
meromorphic function $z$ on $X$.

\begin{theorem}[Existence criterion]\label{thm:existence-criterion}
Let $(X,\rho)$ be a compact real Riemann surface and let $q$ be a non-zero
$\rho$-real meromorphic quadratic differential on $X$. Then $q$ is induced by
a real meromorphic plane map
\[
   X\dashrightarrow \bC^2,\qquad p\mapsto (x(p),y(p)),
\]
in the sense that
\[
   q=dx^2+dy^2,
\]
if and only if $q$ admits an exact admissible half-divisor.  More explicitly,
this means that there exists a meromorphic $1$-form $\omega$ such that
\[
   q=\omega\,\rho^*\overline{\omega}
\]
and $\omega$ has zero periods.

If $\omega=dz$, the corresponding real plane map is
\[
   x={1\over 2}\bigl(z+\overline{z\circ\rho}\bigr),\qquad
   y={1\over 2i}\bigl(z-\overline{z\circ\rho}\bigr).
\]
It is birational onto its image precisely when the two meromorphic functions
$x$ and $y$ generate the full function field $\bC(X)$.
\end{theorem}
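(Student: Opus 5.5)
The plan is to prove the two implications separately and then treat the birationality claim as a statement about function fields. The engine is the factorization $dx^2+dy^2=dz\,dw$ with $z=x+iy$, $w=x-iy$. We must also keep track of how the real structure $\rho$ interacts with complex conjugation.

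For the forward direction, I suppose $x,y$ are real meromorphic functions on $X$, meaning $\overline{x\circ\rho}=x$ and $\overline{y\circ\rho}=y$, and that $q=dx^2+dy^2$. I set $z=x+iy$ and $w=x-iy$. Reality gives $\overline{z\circ\rho}=x-iy=w$, so at the level of forms $dw=\rho^*\overline{dz}$. Taking $\omega=dz$, we get $q=dz\,dw=\omega\,\rho^*\overline\omega$, and $\omega$ is exact, so all its periods vanish. Its divisor $D=\operatorname{div}(dz)$ satisfies $\operatorname{div}(\rho^*\overline\omega)=\rho(D)$. Hence $D+\rho(D)=\operatorname{div}(q)$, and $D$ is an exact admissible half-divisor.

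For the converse, I start from $\omega$ with $q=\omega\,\rho^*\overline\omega$ and zero periods. I would first note that residues are periods around small loops, so all residues of $\omega$ vanish. Integration along paths avoiding the poles is then path-independent, so $z=\int\omega$ is single-valued and holomorphic off the poles. Near a pole of order $m\ge2$ with zero residue, the local primitive is meromorphic with a pole of order $m-1$. Thus $z$ is a global meromorphic function with $dz=\omega$.

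Next I define $\tilde z:=\overline{z\circ\rho}$. It is meromorphic because $\rho$ is antiholomorphic, and $d\tilde z=\rho^*\overline{\omega}$. I then set $x=(z+\tilde z)/2$ and $y=(z-\tilde z)/(2i)$, as in the statement. Since $\rho$ is an involution, $\overline{\tilde z\circ\rho}=z$, and a direct check gives $\overline{x\circ\rho}=x$ and $\overline{y\circ\rho}=y$. Finally $dx^2+dy^2=dz\,d\tilde z=\omega\,\rho^*\overline\omega=q$.

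The point needing the most care is these conjugation conventions: that $\rho^*\overline{\omega}$ is really a meromorphic $1$-form, and that the reality conditions match. I would verify both in a local antiholomorphic chart. I would also remark that the construction depends on the choice of $\omega$: changing $z$ by an additive constant translates the map. Keeping $\omega$ admissible allows replacing $\omega$ by $c\,\omega$ with $|c|=1$, which rotates the map. These freedoms foreshadow the uniqueness statement.

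For the last sentence, $p\mapsto(x(p),y(p))$ factors through the image curve $C$. The map $X\to C$ is birational exactly when the induced inclusion $\bC(C)=\bC(x,y)\hookrightarrow\bC(X)$ is an isomorphism. Since the degree of $X\to C$ equals $[\bC(X):\bC(x,y)]$, the map is birational precisely when $x$ and $y$ generate $\bC(X)$. One degenerate case needs a separate remark: if $x$ and $y$ are both constant then $q=0$, which is excluded. So $C$ is indeed a curve, not a point.
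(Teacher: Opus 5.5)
Your proposal is correct and follows essentially the same route as the paper. In the forward direction you factor $q=dz\,dw$ with $w=\overline{z\circ\rho}$; in the converse you integrate $\omega$ to $z$ and define $x,y$ as in the statement; and the birationality claim reduces to $[\bC(X):\bC(x,y)]=1$. Your additions (vanishing residues implying a meromorphic primitive, the explicit reality check, and ruling out a constant map via $q\neq 0$) only make explicit steps the paper leaves implicit.
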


\begin{proof}
Assume first that $q$ comes from a real plane map.  Put
$z=x+iy$ and $w=x-iy$.  Since the map is real with respect to $\rho$, one has
$w=\overline{z\circ\rho}$.  Therefore
\[
   q=dx^2+dy^2=dz\,dw=dz\,\rho^*\overline{dz}.
\]
Thus $D=\operatorname{div}(dz)$ is an admissible half-divisor.  Moreover
$dz$ is exact, hence has zero periods.

Conversely, suppose that
\[
   q=\omega\,\rho^*\overline{\omega}
\]
with $\omega$ exact.  Choose a meromorphic primitive $z$ of $\omega$, so that
$\omega=dz$, and set
\[
   w=\overline{z\circ\rho},\qquad
   x={z+w\over 2},\qquad y={z-w\over 2i}.
\]
Then $x$ and $y$ are real meromorphic functions on $X$, and
\[
   dx^2+dy^2=dz\,dw
      =\omega\,\rho^*\overline{\omega}=q.
\]
This proves existence of a real meromorphic plane map.  Finally, a meromorphic
map to the plane is birational onto its image exactly when its coordinate
functions generate the function field of $X$; otherwise the map factors
through the intermediate curve with function field $\bC(x,y)$.
\end{proof}

\begin{remark}\label{rem:existence-obstructions}
The theorem separates the two obstructions to existence.  The first is a
divisor-theoretic obstruction: the divisor of $q$ has to split as
$D+\rho(D)$.  In particular, at a fixed point of $\rho$ all orders in
$\operatorname{div}(q)$ must be even.  The second is a period obstruction: the
corresponding meromorphic $1$-form $\omega$ must be exact.  The latter
condition includes, in particular, vanishing of all residues at the poles of
$\omega$.
\end{remark}

\begin{proposition}[Uniqueness for a fixed splitting]\label{prop:unique-fixed-splitting}
Let $q$ be as in Theorem~\ref{thm:existence-criterion}. Suppose that two real
meromorphic maps
\[
   z_j=x_j+i y_j,\qquad w_j=x_j-i y_j,\qquad j=1,2,
\]
induce the same quadratic differential
\[
   q=dz_1\,dw_1=dz_2\,dw_2,
\]
where $w_j=\overline{z_j\circ\rho}$.  If
\[
   \operatorname{div}(dz_1)=\operatorname{div}(dz_2),
\]
then the two realizations differ by a direct Euclidean motion.  More precisely,
there exist $\theta\in\mathbb R$ and $c\in\mathbb C$ such that
\[
   z_2=e^{i\theta}z_1+c.
\]
If the induced quadratic differentials are proportional by a positive
constant, then the conclusion is the same with a Euclidean similarity in place
of a Euclidean motion.
\end{proposition}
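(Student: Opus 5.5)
The plan is to compare the two splittings through the meromorphic function $h:=dz_2/dz_1$ on $X$. This quotient is well defined because $q\neq 0$ forces $dz_1\not\equiv 0$. Since $\operatorname{div}(dz_1)=\operatorname{div}(dz_2)$, $h$ has no zeros and no poles on the compact Riemann surface $X$. Hence $h$ is holomorphic and therefore a nonzero constant $\lambda\in\bC^*$. So $dz_2=\lambda\,dz_1$.

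Next I would pin down $\lambda$ using reality and the equality of the quadratic differentials. Applying $\rho^*$ and complex conjugation to $dz_2=\lambda dz_1$ gives
\[
   dw_2=\rho^*\overline{dz_2}=\overline{\lambda}\,\rho^*\overline{dz_1}=\overline{\lambda}\,dw_1 .
\]
Therefore $q=dz_2\,dw_2=|\lambda|^2\,dz_1\,dw_1=|\lambda|^2 q$. Since $q\neq0$, this gives $|\lambda|=1$, i.e. $\lambda=e^{i\theta}$ with $\theta\in\bR$. In the variant where $dz_2\,dw_2=c\,q$ with $c>0$, the same computation yields $|\lambda|^2=c$, so $\lambda=\sqrt c\,e^{i\theta}$.

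Finally I would integrate. The function $z_2-\lambda z_1$ is meromorphic with zero differential, so it is constant on the connected surface $X$. Call this constant $c_0\in\bC$; then $z_2=\lambda z_1+c_0$. Since $w_j=\overline{z_j\circ\rho}$, it follows that $w_2=\overline{\lambda}\,w_1+\overline{c_0}$. Translating back with $x=(z+w)/2$ and $y=(z-w)/(2i)$ shows that $(x_2,y_2)$ is obtained from $(x_1,y_1)$ by a rotation by $\theta$ followed by the translation by $c_0$. This is a direct Euclidean motion, and in the proportional case a direct similarity with ratio $\sqrt c$.

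The only step needing care is the first: one must check that the equality of divisors really forces $h$ to have no zeros or poles, including at the poles of the $dz_j$. This holds because the divisors are equal as divisors, with multiplicities, so the orders cancel at every point. The argument also uses that $X$ is connected and compact.
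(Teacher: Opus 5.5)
Your proof is correct and follows the paper's argument essentially step for step. The quotient $dz_2/dz_1$ has no zeros or poles, hence is a constant $\lambda$; reality gives $dw_2=\overline{\lambda}\,dw_1$; equality of the quadratic differentials forces $|\lambda|=1$ (or $|\lambda|^2$ equal to the proportionality constant); and integration finishes. Your remarks on matching multiplicities at the poles and on connectedness of $X$ simply make explicit what the paper leaves implicit.
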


\begin{proof}
The quotient $dz_2/dz_1$ is a nowhere vanishing meromorphic function on the
compact Riemann surface $X$, hence is constant.  Thus
\[
   dz_2=\lambda dz_1
\]
for some $\lambda\in\mathbb C^*$.  The reality condition gives
$dw_2=\overline\lambda dw_1$.  Since the quadratic differentials are equal,
\[
   dz_2\,dw_2=|\lambda|^2 dz_1\,dw_1=dz_1\,dw_1,
\]
so $|\lambda|=1$.  Hence $\lambda=e^{i\theta}$, and integration gives
$z_2=e^{i\theta}z_1+c$.  The proportional case is identical and gives
$|\lambda|$ equal to the similarity ratio.
\end{proof}

\begin{corollary}\label{cor:finite-realizations-fixed-q}
For a fixed pair $(X,q)$ satisfying the hypotheses above, the realizations of
$q$ are classified, up to direct Euclidean motions, by exact admissible
half-divisors $D$ such that the associated primitive $z$ gives
$\bC(x,y)=\bC(X)$.  In particular, for fixed $q$ there are only finitely many
possible divisor splittings, and for each such splitting there is at most one
realization modulo direct Euclidean motions.
\end{corollary}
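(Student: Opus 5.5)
The corollary combines Theorem~\ref{thm:existence-criterion} with Proposition~\ref{prop:unique-fixed-splitting}. The plan is to build a map from realizations (modulo direct Euclidean motions) to exact admissible half-divisors, show it is injective with the stated image, and then count the possible half-divisors.

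First, the map. Given a realization $p\mapsto(x(p),y(p))$ of $q$, set $z=x+iy$. By the first half of the proof of Theorem~\ref{thm:existence-criterion}, $D=\operatorname{div}(dz)$ is an exact admissible half-divisor. Birationality of the realization means $\bC(x,y)=\bC(X)$, and this condition is unchanged by a Euclidean motion. The divisor $D$ is also invariant under $z\mapsto e^{i\theta}z+c$, since $\operatorname{div}(e^{i\theta}dz)=\operatorname{div}(dz)$. So $D$ depends only on the motion class.

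Second, surjectivity onto the stated set. Let $D$ be an exact admissible half-divisor, with exact $\omega$ satisfying $\operatorname{div}(\omega)=D$ and $q=\omega\rho^*\overline\omega$. The converse half of Theorem~\ref{thm:existence-criterion} produces a real map with $dx^2+dy^2=q$. It is birational exactly when $\bC(x,y)=\bC(X)$. There is a subtlety: a different $\omega'$ with the same divisor differs from $\omega$ by a constant $\lambda$, and $|\lambda|=1$ is forced by $q=\omega\rho^*\overline\omega$. So the birationality condition does not depend on the choice of $\omega$, and it is a property of $D$ alone.

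Third, injectivity. If two realizations give the same $D$, then $\operatorname{div}(dz_1)=\operatorname{div}(dz_2)$. Proposition~\ref{prop:unique-fixed-splitting} then gives $z_2=e^{i\theta}z_1+c$, so each splitting carries at most one realization modulo direct motions.

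Finally, finiteness. Write $\operatorname{div}(q)=\sum_P m_P P$, a finite sum. Any half-divisor $D$ with $D+\rho(D)=\operatorname{div}(q)$ is supported on $\operatorname{supp}\operatorname{div}(q)$, because it is a divisor of a meromorphic form and points outside the support must receive coefficient $0$. At each point $P$, choosing the coefficient $a_P$ determines $a_{\rho(P)}=m_P-a_P$. The main obstacle is that $a_P$ is a priori an arbitrary integer, since positive and negative coefficients could cancel between $P$ and $\rho(P)$. So finiteness needs an extra bound. For the bound, I would argue that $\omega$ and $\rho^*\overline\omega$ are both factors of $q$ locally. Near $P$, with $q=\omega\cdot\eta$ and both factors meromorphic, we have $\operatorname{ord}_P\omega+\operatorname{ord}_P\eta=m_P$. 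We also need $\operatorname{ord}_P\omega\ge\min(0,\dots)$, but this does not bound it by itself. The resolution I would try is to invoke the global constraint $\deg D=2g-2$ together with $D\ge -\infty$-type bounds. If that fails, I would restrict to effective-part decompositions, i.e.\ to splittings in which zeros of $q$ split into zeros of $\omega$ and $\eta$ and poles into poles. This is the natural reading of ``splitting''. Under that reading $0\le a_P\le m_P$ when $m_P\ge0$, and $m_P\le a_P\le 0$ when $m_P<0$, which leaves finitely many choices.
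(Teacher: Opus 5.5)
Your classification argument is correct and is the paper's proof. The divisor $D=\operatorname{div}(dz)$ is constant on direct-motion classes, every exact admissible $D$ is attained via the converse half of Theorem~\ref{thm:existence-criterion}, and Proposition~\ref{prop:unique-fixed-splitting} gives injectivity. Your remark that $|\lambda|=1$ makes $\bC(x,y)=\bC(X)$ a property of $D$ alone is a precision the paper leaves implicit.

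The gap is the finiteness step, where three things go wrong.
\begin{enumerate}
\item Your claim that $D$ must be supported on $\operatorname{supp}\operatorname{div}(q)$ is false. At a point $P\neq\rho(P)$ the only constraint is $a_P+a_{\rho(P)}=m_P$. For example, on $\bCP^1$ with $\rho(t)=\bar t$, replacing $\omega$ by $\bigl(\frac{t-\alpha}{t-\bar\alpha}\bigr)^{e}\omega$ leaves $\omega\,\rho^*\overline{\omega}$ unchanged but adds $e[\alpha]-e[\bar\alpha]$ to $D$.
\item The constraint $\deg D=2g-2$ holds automatically for every admissible half-divisor, since $\deg\rho(D)=\deg D$. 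So it bounds nothing.
\item Restricting to ``non-cancelling'' splittings is not a reading of the statement but a different statement. It discards genuine birational realizations, so the classification in the first sentence of the corollary would no longer be exhaustive.
\end{enumerate}

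In fact no bound can be found, because the finiteness clause is false even for exact half-divisors satisfying the birationality condition. Take $X=\bCP^1$ with $\rho(s)=1/\bar s$ (i.e.\ $s=\frac{t+i}{t-i}$, $\rho(t)=\bar t$), and put $z_m=\frac{s^m}{m}+\frac{s^{m+1}}{m+1}$ for $m\ge 1$. Then $\overline{z_m\circ\rho}=\frac{s^{-m}}{m}+\frac{s^{-m-1}}{m+1}$, and
\[
dz_m\,\rho^*\overline{dz_m}=-\frac{(1+s)^2}{s^{3}}\,ds^2\qquad\text{for every }m .
\]
On $s=e^{i\theta}$ all these curves have speed $|1+e^{i\theta}|$; the case $m=1$ is the cardioid. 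The divisors $\operatorname{div}(dz_m)=(m-1)[0]+[-1]-(m+2)[\infty]$ are pairwise distinct, so the $z_m$ are pairwise inequivalent under direct motions.

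Each $z_m$ is birational. Indeed, $z_m$ has its only pole at $\infty$ and $\overline{z_m\circ\rho}$ has its only pole at $0$. By L\"uroth, $\bC(z_m,\overline{z_m\circ\rho})=\bC(h)$, where after a M\"obius change $h^{-1}(\infty)=\{\infty\}$ and $h^{-1}(c)=\{0\}$, i.e.\ $\bC(h)=\bC(s^d)$. Then $z_m\in\bC[s^d]$ forces $d\mid\gcd(m,m+1)=1$.

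For $m\ge 2$, $dz_m$ vanishes at $s=0$, where $q$ has a pole of order $3$. This also exhibits concretely the realizations your restricted reading would throw away.

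The paper's one-line justification (``finite support and multiplicities'') has exactly the defect you sensed. What is actually provable is the classification together with uniqueness for each fixed splitting. The finiteness clause must be dropped or given extra hypotheses, not rescued by reinterpretation.
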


\begin{proof}
Every realization gives the divisor $D=\operatorname{div}(dz)$, and this
$D$ is an exact admissible half-divisor by Theorem~\ref{thm:existence-criterion}.
Conversely, every exact admissible half-divisor gives a real meromorphic plane
map.  Proposition~\ref{prop:unique-fixed-splitting} shows uniqueness for a
fixed $D$.  Since the support and multiplicities of $\operatorname{div}(q)$ are
finite, there are only finitely many decompositions
$D+\rho(D)=\operatorname{div}(q)$.
\end{proof}

\begin{lemma}\label{lm:res}
Let $p$ be a smooth point of $\Ga$, and assume that a line through one of the circular points at infinity has ordinary tangency with $\Ga$ at $p$. Then the corresponding zero of the arc length quadratic differential $ds^2$ is simple.
Moreover, at a generic branch of $\Ga$ at infinity, the local square root $ds$ has a pole of order $2$ with zero residue; hence $ds^2$ has a pole of order $4$.
\end{lemma}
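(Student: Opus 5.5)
The plan is to compute everything in a local parameter of the normalization, writing $ds^2=(x'(t)^2+y'(t)^2)\,dt^2=z'(t)\,w'(t)\,dt^2$ with $z=x+iy$ and $w=x-iy$, as in the proof of Theorem~\ref{thm:existence-criterion}. In these isotropic coordinates the two circular points $I,J$ correspond to the directions $dw=0$ and $dz=0$: a line through $J=(1:-i:0)$ is a level set of $z$, and a line through $I$ is a level set of $w$. So the order of vanishing of $ds^2$ at a point equals the sum of the orders of vanishing of $dz$ and $dw$ there, and the problem splits into the two isotropic factors separately.

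\emph{Finite points.} Let $p$ be a smooth point and let $t$ be a local parameter at $p$. Suppose the tangent at $p$ passes through $J$, say, so that the tangent line is $\{z=z(p)\}$. Ordinary tangency means that this line meets $\Ga$ at $p$ with intersection multiplicity exactly $2$. That is, $z(t)-z(p)$ has a zero of order exactly $2$, so $dz$ has a simple zero at $p$. At a smooth point, $(x',y')\neq 0$. Hence $z'$ and $w'$ cannot both vanish, and the tangent cannot be isotropic for both circular points unless $p$ is singular. Thus $w'(p)\neq 0$, and $ds^2=z'w'\,dt^2$ has a simple zero at $p$.

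\emph{Branches at infinity.} At a generic branch, $\overline\Ga$ meets the line at infinity transversally at a non-circular point $(a:b:0)$, with $a^2+b^2\neq 0$. Take $t$ to be a local parameter with $x=a/t+\alpha_0+\alpha_1t+\dots$ and $y=b/t+\beta_0+\beta_1t+\dots$. Then
\[
z=(a+ib)/t+O(1),\qquad w=(a-ib)/t+O(1),
\]
and both leading coefficients are nonzero. Hence $dz$ and $dw$ each have a pole of order exactly $2$, so $ds^2=dz\,dw$ has a pole of order $4$. Moreover $ds^2=(a^2+b^2)t^{-4}(1+O(t))\,dt^2$. A local square root is therefore
\[
ds=\sqrt{a^2+b^2}\,t^{-2}\bigl(1+c_1t+c_2t^2+\dots\bigr)\,dt,
\]
and its residue is $\sqrt{a^2+b^2}\,c_1$.

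\emph{Main obstacle: the residue.} The delicate step is showing $c_1=0$, i.e.\ that the $t^{-1}$ coefficient of $x'^2+y'^2$ vanishes after extracting the square root. Since $x'=-a/t^2+\alpha_1+\dots$ has no $t^{-1}$ term, we get
\[
x'^2+y'^2=(a^2+b^2)t^{-4}-2(a\alpha_1+b\beta_1)t^{-2}+O(t^{-1}).
\]
There is no $t^{-3}$ term, so $c_1=0$ and the residue vanishes. This is the key point: derivatives of Laurent series never carry a $t^{-2}$ term in $x'$, which kills the $t^{-3}$ coefficient of $ds^2$. The other checks are routine: $t$ is a legitimate parameter because the branch is smooth and transverse to the line at infinity, and the choice of sign of the square root does not affect the conclusion.
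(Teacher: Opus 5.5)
Your proof is correct and takes the same route as the paper: a local-parameter computation at the smooth point and a Laurent expansion along the branch at infinity. It is more careful on both counts. The factorization $x'^2+y'^2=z'w'$ together with $(x',y')\neq 0$ justifies the equivalence between ordinary isotropic tangency and a simple zero, which the paper only asserts. Your observation that $x',y'$ have no $t^{-1}$ term, so that $q$ has no $t^{-3}$ term, is exactly what gives a residue-free $ds$; the paper's displayed error term $O(du^2/u^3)$ would not by itself exclude a $du/u$ term.

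There are two harmless slips. First, lines through $I=(1:i:0)$ are the level sets of $z=x+iy$ and lines through $J$ those of $w=x-iy$; you have them swapped, which changes nothing by symmetry. Second, in your final paragraph the term a derivative of a Laurent series never carries is the $t^{-1}$ term of $x'$, not the $t^{-2}$ term, as you correctly said a few lines earlier.
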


\begin{proof}
Let $t$ be a local parameter at a smooth affine point and write
\[
 ds^2=(x'(t)^2+y'(t)^2)dt^2.
\]
The equation $x'(t)^2+y'(t)^2=0$ says exactly that the tangent direction is isotropic, or equivalently that the tangent line passes through one of the circular points at infinity. Ordinary contact with the corresponding isotropic tangent is therefore equivalent to a simple zero of the factor $x'(t)^2+y'(t)^2$.

At a generic branch at infinity choose a parameter $u$ such that
\[
 x=\frac{a}{u}+O(1),\qquad y=\frac{b}{u}+O(1),
\]
with $a^2+b^2\neq 0$. Then
\[
 dx^2+dy^2=(a^2+b^2)\frac{du^2}{u^4}+O\left(\frac{du^2}{u^3}\right).
\]
Thus a local square root has the form
\[
 ds=\frac{c\,du}{u^2}+O(1)\,du,\qquad c^2=a^2+b^2.
\]
There is no $du/u$ term, so the residue is zero. The asserted pole orders follow.
\end{proof}

Let us also spell out the rational case.  A rational curve
$\Ga\subset \bC P^2$ is given by a triple of polynomials $(U(t),V(t),W(t))$
without a common factor.  If
\[
   x(t)=\frac{U(t)}{W(t)},\qquad y(t)=\frac{V(t)}{W(t)},
\]
then
\[
 ds^2=\left((x'(t))^2+(y'(t))^2\right)dt^2
 = \frac{(U'W-UW')^2+(V'W-VW')^2}{W^4}\,dt^2.
\]
Thus, in genus zero, the existence problem becomes the problem of deciding
when a prescribed rational quadratic differential admits a representation of
this Wronskian form.  The criterion above gives a complementary intrinsic
formulation: one has to choose a factor $dz$ of $q$ with zero residues and zero
periods, and then integrate it to recover the rational function $z=x+iy$.

\section{Exact quadratic and higher order differentials}\label{sec:exact}

For $k \geq 2$, a meromorphic $k$-differential on a Riemann surface $X$ is a meromorphic section of the line bundle $K_{X}^{\otimes k}$ where $K_{X}$ is the canonical line bundle. In local charts, a $k$-differential writes as $f(z)dz^{k}$ where $f$ is a meromorphic function. The case $k = 2$ corresponds to quadratic differentials.
\par
The moduli space of $k$-differentials is stratified according to the multiplicity of the zeros and the poles, see \cite{BCGGM} for background. We denote by $\Omega^{k}\mathcal{M}_{g}(a_{1},\dots,a_{n})$ the moduli space of pairs $(X,\omega)$ where $X$ is a compact Riemann surface of genus $g$ and $\omega$ is a $k$-differential on $X$ with $n$ singularities of multiplicities $a_{1},\dots,a_{n}$ (up to a biholomorphic change of variable). Riemann-Roch theorem implies that $\sum\limits_{i=1}^{n} a_{i} = k(2g-2)$.

\begin{definition} {\rm 
For a (meromorphic) $k$-differential $\phi$ on a compact Riemann surface $X$, the canonical $k$-cover $\Theta :{X^\prime}\longrightarrow X$ is a cover of degree $k$ ramified at the singularities of $\phi$ whose order is not divisible by $k$ and such that the pullbacks of each of the $k$ branches of $\phi^{1/k}$ agree as a uni-valued $1$-form. If the $k$-differential $\phi$ is primitive (if it is not the global power of a differential of lower order), its canonical $k$-cover is connected. The canonical $k$-cover is obtained by continuation of any branch of $\phi^{1/k}$ on $X$. (The canonical $1$-form on $X^\prime$ will be denoted by $\root k \of \phi$.)

In particular, for a meromorphic quadratic differential $\Psi$ on a compact Riemann surface $X$, one gets a canonical double cover $\Theta: X^\prime \to X$ branched at the singularities of $\Psi$ of odd order and a meromorphic $1$-differential $\sqrt \Psi$ on $X^\prime$.}
\end{definition}

\begin{definition} {\rm A meromorphic $k$-differential $\phi$ on a compact Riemann surface $X$ is called {\emph exact} if the $1$-form $\root k \of \phi$ is exact.}
\end{definition}

\begin{theorem}\label{thm:exact-k}
For a meromorphic $k$-differential $\phi$ on a compact Riemann surface $X$, the following three conditions are equivalent:
\begin{enumerate}
\item the canonical $1$-form $\root k \of \phi$ is exact on the canonical $k$-cover $X'$;
\item the Abelian integral $\int \phi^{1/k}$ is an algebraic multivalued function on $X$;
\item there exists a meromorphic function $f$ on $X$ such that
\[
 \phi=f\left(\frac{df}{f}\right)^k.
\]
\end{enumerate}
\end{theorem}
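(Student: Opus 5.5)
The plan is to prove the cycle (1)$\Rightarrow$(3)$\Rightarrow$(1) and the equivalence (1)$\Leftrightarrow$(2) separately. In all three steps the key tool is the cyclic deck group of the canonical $k$-cover $\Theta:X'\to X$. A generator $\tau$ of this group acts on the canonical form $\omega:=\root k \of \phi$ by $\tau^*\omega=\zeta\omega$, where $\zeta$ is a root of unity. When $\phi$ is primitive, $X'$ is connected and $\zeta$ is a primitive $k$-th root of unity. In the non-primitive case $\phi=\psi^m$, I would work on one connected component: it is the canonical $(k/m)$-cover of the corresponding lower-order differential, and $\zeta$ has order $k/m$. In the extreme case $\phi=\eta^k$ with $\eta$ a $1$-form on $X$, one has $X'=X$ and $\zeta=1$.

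\textbf{(3)$\Rightarrow$(1).} Suppose $\phi=f(df/f)^k=f^{1-k}df^k$, and let $g$ be a branch of $f^{1/k}$. Then $k\,dg=f^{1/k-1}df$, so $(k\,dg)^k=\phi$. Thus on the cover where $f^{1/k}$ becomes single-valued, the form $k\,dg$ is a $k$-th root of $\phi$. I would check that this cover is (a component of) the canonical $k$-cover. The point is that $\phi$ and $f$ have the same ramification data modulo $k$: $\operatorname{ord}\phi=\operatorname{ord}f+k(\operatorname{ord}f-1)$ at zeros and poles of $f$, and $\operatorname{ord}\phi=k\operatorname{ord}df$ elsewhere. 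Then $\root k\of\phi=\zeta^j k\,dg$ is exact.

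\textbf{(1)$\Rightarrow$(3).} Let $F$ be a meromorphic primitive of $\omega$ on $X'$. From $\tau^*\omega=\zeta\omega$ we get $\tau^*F=\zeta F+c$. If $\zeta\neq1$, replace $F$ by $F+c/(\zeta-1)$, so that $\tau^*F=\zeta F$. Then $f:=(F/k)^k$ is $\tau$-invariant and descends to a meromorphic function on $X$. Writing $g=F/k$, one gets $f(df/f)^k=g^k(k\,dg/g)^k=(k\,dg)^k=(dF)^k=\phi$. If $\zeta=1$, $F$ already lives on $X$ and the same formula with $f=(F/k)^k$ works directly.

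\textbf{(1)$\Leftrightarrow$(2).} If $\omega=dF$ with $F$ meromorphic on $X'$, then $F$ lies in $\bC(X')$. This is a finite extension of $\bC(X)$ because $\Theta$ is finite, so $F$ satisfies a polynomial equation over $\bC(X)$. All determinations of $\int\phi^{1/k}$ are of the form $\zeta^jF\circ\sigma+\mathrm{const}$, and these constants come from a finite set once the base point is fixed. Hence the Abelian integral is algebraic.

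Conversely, suppose the integral is algebraic, so it has finitely many branches over each point. First, $\omega$ has no residues: near a pole with nonzero residue, the integral contains $r\log u$, and continuation around the pole produces infinitely many values. Second, suppose $\omega$ has a nonzero period $c$ along a loop $\gamma\subset X'$. Continuing the integral along $\Theta(\gamma)$ traversed $n$ times adds $nc$, which gives infinitely many distinct branches. Either case contradicts algebraicity, so $\omega$ is exact.

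The delicate point is this last step. One has to make sure that continuation along the image loop, traversed repeatedly, really returns to the same branch of $\phi^{1/k}$, so that the periods add up rather than being rotated by $\zeta$. I would handle this by lifting: take $\gamma$ closed on $X'$ itself, so that $\omega$ returns to itself along $\gamma$ and each traversal adds exactly $c$. The other point needing care is verifying that the $f^{1/k}$-cover coincides with the canonical cover in (3)$\Rightarrow$(1); for that I would rely on the order bookkeeping above.
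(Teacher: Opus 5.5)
Your proposal is correct and follows essentially the same route as the paper's proof. The shared steps are: (3)$\Rightarrow$(1) via $\root k \of \phi = k\,d(f^{1/k})$; (1)$\Rightarrow$(3) by shifting the primitive so the deck group acts on it by roots of unity and descending its $k$-th power; and (1)$\Leftrightarrow$(2) via finiteness of the cover and the infinitely many values a nonzero period would produce. Your handling of the non-primitive case and of lifting loops to $X'$ is somewhat more careful than the paper's.

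Two small corrections. First, at a zero or pole of $f$ of order $a$, the order of $\phi$ is $a-k$, not $a+k(a-1)$; your formula is right only modulo $k$, though that is all you use. Second, the identification of the $f^{1/k}$-cover with the canonical cover is best obtained globally from $\phi^{1/k}=f^{1/k}\,(df/f)$, where $df/f$ is single-valued on $X$. Matching local ramification data alone does not determine a cyclic cover when $X$ has positive genus.
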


\begin{proof}
If (3) holds, then on the canonical cover one may choose a branch of $f^{1/k}$ and write
\[
 \root k \of \phi=f^{1/k}\frac{df}{f}=k\,d(f^{1/k}),
\]
so the canonical $1$-form is exact. This proves (3)$\Rightarrow$(1), and (1)$\Rightarrow$(2) is immediate: an exact primitive on the finite cover descends to a finite-valued, hence algebraic, multivalued function on $X$.

Conversely, if $\int\phi^{1/k}$ is algebraic, its continuation has only finitely many branches. Therefore the periods of the canonical form on $X'$ vanish; otherwise continuation along powers of a loop with nonzero period would produce infinitely many values. Thus (ii)$\Rightarrow$(i).

Assume (i), and write $\root k \of \phi=dg$ on $X'$. The deck group acts on $dg$ by multiplication by $k$-th roots of unity. After adding a constant to $g$, the same is true for $g$ itself. Hence $h=g^k$ is invariant under the deck group and descends to a meromorphic function on $X$. Since
\[
 dh=k g^{k-1}dg,
\]
one obtains
\[
 \phi=(dg)^k=\frac{1}{k^k}h\left(\frac{dh}{h}\right)^k.
\]
Absorbing the constant $1/k^k$ into $h$ gives (3).
\end{proof}

It should be remarked that any such $k$-differential with exact cover induces a conical metric of constant positive curvature with coaxial monodromy (by considering the logarithmic derivative of the developing map).

\begin{cor}
A singularity of such a $k$-differential $\phi$ either coincides with a singularity of $f$ or a zero of $df$. More precisely, if $z$ is a singularity of order $a$ of $f$, then it is a singularity of order $a-k$ of $\phi$. If $z$ is not a singularity of $f$ but is a zero of order $a$ of $df$, then it is a zero of order $ak$ of $\phi$.

In particular, singularities of $\phi$ whose order is at least $1-k$ and fails to be divisible by $k$ are always zeros of $f$.

It is also true that if $\phi$ has only one pole and is defined on the Riemann sphere (in other words, $\phi$ is a polynomial), then $f$ has no pole except the unique pole of $\phi$ and is therefore also a polynomial.
\end{cor}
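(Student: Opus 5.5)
The plan is to compute everything locally from the identity $\phi=f\,(df/f)^k=f^{1-k}(df)^k$, which is the form given by condition (3) of Theorem~\ref{thm:exact-k}. Fix a point $z_0\in X$ with a local coordinate $t$ centred at it. Write $f=t^{m}u(t)$ with $u(0)\neq0$ and $m\in\mathbb Z$. If $m\neq0$, then $df=t^{m-1}\bigl(mu+tu'\bigr)dt$ with $mu(0)+0\neq0$, so $df$ has order exactly $m-1$. Hence
\[
\operatorname{ord}_{z_0}\phi=(1-k)m+k(m-1)=m-k .
\]
This is the case where $z_0$ is a zero or pole of $f$, i.e.\ a "singularity of $f$" in the sense of the statement, of order $a=m$; it gives order $a-k$ for $\phi$. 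If $m=0$, then $f(z_0)\neq0,\infty$, so $f^{1-k}$ is a unit, and $\operatorname{ord}\phi=k\cdot\operatorname{ord}(df)$. This vanishes unless $df$ has a zero of order $a$, in which case $\phi$ has a zero of order $ak$. Together these two cases prove the first two sentences of the statement: a singularity of $\phi$ is either a singularity of $f$ or a zero of $df$.

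For the "in particular" claim, take a singularity of $\phi$ of order $b\geq 1-k$ with $k\nmid b$. It cannot come from the case $m=0$, since there $b=ak$ is divisible by $k$. So $b=m-k$ with $m\neq0$, and $m=b+k\geq1$, so $z_0$ is a zero of $f$. The only point to watch is the bookkeeping of the convention that a singularity of order $a$ of $f$ means a zero ($a>0$) or a pole ($a<0$) of order $|a|$. The case $m=k$, where $\phi$ is regular with order $0$, should be noted as not counting as a singularity.

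For the polynomial claim, let $X=\mathbb{CP}^1$ and suppose $\phi$ has a single pole, at $\infty$. If $f$ had a pole at some other point $z_0$, of order $-m>0$, then $\operatorname{ord}_{z_0}\phi=m-k<0$, so $z_0$ would be a pole of $\phi$, a contradiction. Hence $f$ is holomorphic on $\mathbb C$, and being rational it is a polynomial. I expect no real obstacle here. The only subtle points are the degenerate case of constant $f$ and the interpretation of "singularity" in the statement. Constant $f$ is excluded because it gives $\phi=0$, contrary to the standing assumption that $\phi$ is a nonzero $k$-differential.
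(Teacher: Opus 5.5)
Your proof is correct and is the intended argument: the paper states this corollary without a written proof, as an immediate consequence of writing $\phi=f^{1-k}(df)^k$ via condition (3) of Theorem~\ref{thm:exact-k}. Your local count $\operatorname{ord}\phi=(1-k)m+k(m-1)=m-k$ at zeros and poles of $f$, together with $\operatorname{ord}\phi=k\operatorname{ord}(df)$ elsewhere, is exactly the verification it leaves implicit, and your deductions of the ``in particular'' clause (via $m=b+k\geq 1$) and of the polynomial case are sound.
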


\begin{remark}
The latter corollary implies in particular that the only degrees of freedom are the critical values of $f$ (positions of the zeros of the $k$-differential whose order is a multiple of $k$ in the developing map).
\end{remark} 

Theorem~\ref{thm:exact-k} provides a parametrization of $k$-differentials with exact cover on a given compact Riemann surface.

\begin{proposition}\label{prop:exact-k-no-divisible-zeros}
Let $\phi$ be a meromorphic $k$-differential with exact canonical cover on a compact Riemann surface $X$. Assume that $\phi$ has no zero whose order is divisible by $k$. Then $X\simeq \mathbb{CP}^{1}$ and, in a suitable coordinate $z$,
\[
 \phi=\lambda z^{a}dz^{k},\qquad \lambda\in\mathbb C^*,\quad a\in\mathbb Z .
\]
\end{proposition}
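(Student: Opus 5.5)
By Theorem~\ref{thm:exact-k}, the plan is to write $\phi=f\,(df/f)^k$ for a nonconstant meromorphic function $f$ on $X$. The hypothesis then forces $f$ to have very few critical points, and Riemann--Hurwitz pins down $X$ and $f$. The first step is the local computation behind the corollary above. At a point $p$ where $f$ is holomorphic and nonzero and $df$ vanishes to order $m\ge 1$, $\phi$ has a zero of order $mk$, which is divisible by $k$. The hypothesis rules this out, so $f$ has no critical points outside its zeros and poles.

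The next step is Riemann--Hurwitz. Let $n=\deg f$, and let $f^{-1}(0)$ and $f^{-1}(\infty)$ have $r_0$ and $r_\infty$ distinct points. The only ramification of $f$ lies over $0$ and $\infty$, and it contributes $(n-r_0)+(n-r_\infty)$. Hence
\[
2g-2=-2n+(n-r_0)+(n-r_\infty)=-(r_0+r_\infty).
\]
Since $r_0,r_\infty\ge 1$, we get $g=0$ and $r_0=r_\infty=1$. So $X\simeq\bCP$, and $f$ has a single zero and a single pole, each of order $n$.

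Now choose a coordinate $z$ on $\bCP$ sending the zero of $f$ to $0$ and the pole to $\infty$. Then $f=cz^n$ with $c\in\bC^*$. Substituting, $df/f=n\,dz/z$, and
\[
\phi=c\,n^k z^{n-k}\,dz^k .
\]
This is the claimed form with $a=n-k$ and $\lambda=cn^k$.

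I do not expect a real obstacle. The one point needing care is to check in the first step that zeros of $df$ at zeros or poles of $f$ are harmless, since they are absorbed into the ramification over $0$ and $\infty$. A second check is that $f$ is nonconstant. If $f$ were constant, then $df=0$ and $\phi=0$, which is excluded since $\phi$ is a nonzero differential.
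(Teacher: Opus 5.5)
Your proof is correct and follows essentially the same route as the paper: you write $\phi=f(df/f)^k$ via Theorem~\ref{thm:exact-k}, note that critical points of $f$ off $f^{-1}(\{0,\infty\})$ would give zeros of order divisible by $k$, and then use Riemann--Hurwitz to get $X\simeq\mathbb{CP}^1$ and $f=cz^n$. Your explicit count $2g-2=-(r_0+r_\infty)$ also spells out a step the paper leaves implicit, namely that $f$ has exactly one zero and one pole.
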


\begin{proof}
By Theorem~\ref{thm:exact-k}, there is a meromorphic function $f:X\to\mathbb{CP}^{1}$ such that
\[
 \phi=f\left(\frac{df}{f}\right)^k .
\]
If $p$ is a critical point of $f$ with $f(p)\neq0,\infty$ and ramification order $r\geq1$, then $df$ has a zero of order $r$ at $p$, and therefore $\phi$ has a zero of order $kr$, divisible by $k$. By assumption there are no such points. Hence all ramification of $f$ lies over $0$ and $\infty$.

Thus $f$ restricts to an unramified covering
\[
 X\setminus f^{-1}(\{0,\infty\})\longrightarrow \mathbb{C}^{*}.
\]
Equivalently, after compactification, $f$ is a finite covering of $\mathbb{CP}^{1}$ ramified over at most the two points $0$ and $\infty$. By the Riemann--Hurwitz formula this forces $X\simeq\mathbb{CP}^{1}$, and after choosing a coordinate $z$ on $X$ one may write $f=z^n$ up to multiplication by a non-zero constant and replacement of $z$ by $1/z$. Consequently
\[
 \phi=z^n\left(n\frac{dz}{z}\right)^k
     = n^k z^{n-k}dz^k,
\]
up to a non-zero constant. This is the claimed form.
\end{proof}

\begin{proposition}\label{prop:rational-real-divisor}
Let $\Gamma$ be a rational plane curve parametrized by two real rational
functions $x(t)$ and $y(t)$, and let
\[
 q=(x'(t)^2+y'(t)^2)dt^2
\]
be its arc length quadratic differential on $\mathbb{CP}^1$. Then the
divisor of $q$ is invariant under complex conjugation and every real
zero or pole of $q$ has even order. Equivalently, after multiplication
by a non-zero real constant, the coefficient of $q$ factors as
\[
 \prod_i (t-\alpha_i)^{2a_i}
 \prod_j \bigl(t^2-2u_jt+u_j^2+v_j^2\bigr)^{b_j},
\]
where $\alpha_i,u_j,v_j\in\mathbb R$, $v_j\ne0$, and the exponents
$a_i,b_j$ are integers. In particular, the non-real singularities occur
in conjugate pairs, whereas real singularities occur with even order.
\end{proposition}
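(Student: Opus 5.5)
The approach is to write the coefficient of $q$ as a product of two conjugate rational functions. Put $z(t)=x(t)+iy(t)$ and $w(t)=x(t)-iy(t)$, exactly as in Section~\ref{sec:which}. Since $x,y$ have real coefficients, $w(t)=\overline{z(\bar t)}$. Then
\[
 Q(t):=x'(t)^2+y'(t)^2=z'(t)\,w'(t)=z'(t)\,\overline{z'(\bar t)} .
\]
This is the genus-zero instance of the splitting $q=dz\,\rho^*\overline{dz}$ of Theorem~\ref{thm:existence-criterion}, with $\rho(t)=\bar t$. Everything follows from this factorization together with a bookkeeping of divisors.

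First I will establish conjugation invariance. The coefficient $Q$ is a rational function with real coefficients, and $dt^2$ is real. Hence $\operatorname{div}(q)$ is invariant under $t\mapsto\bar t$ on $\mathbb{CP}^1$, including the point $\infty$, where the order is computed in the real coordinate $s=1/t$. Concretely, if $\alpha\notin\mathbb R$ is a zero or pole of $Q$ of order $m$, then so is $\bar\alpha$. This gives the conjugate pairing of the non-real singularities.

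Next I will show that real singularities have even order. Write $D=\operatorname{div}(dz)$. Then $\operatorname{div}(q)=D+\rho(D)$, since $\operatorname{div}(\rho^*\overline{dz})=\rho(D)$. At a real point $p=\rho(p)$, the order of $q$ is therefore $2\operatorname{ord}_p(dz)$, which is even. This is the observation in Remark~\ref{rem:existence-obstructions}. To make it fully concrete, at a finite real $\alpha$ I will write $z'(t)=(t-\alpha)^m g(t)$ with $g(\alpha)\ne0,\infty$. Then $w'(t)=(t-\alpha)^m\overline{g(\bar t)}$, and the conjugate factor is nonzero and finite at $\alpha$. The same works at $t=\infty$ using $s=1/t$, keeping track of $dt=-ds/s^2$; this only shifts the order by the even number $-4$ in $q$.

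Finally, I will assemble the factorization. I will collect the real roots and poles $\alpha_i$ of $Q$, with orders $2a_i$, and the conjugate pairs $u_j\pm iv_j$, with common order $b_j$. The product $(t-u_j-iv_j)(t-u_j+iv_j)$ equals $t^2-2u_jt+u_j^2+v_j^2$. What remains is a nonzero constant; because $Q$ and all the factors are real, this constant is real, and dividing by it gives the stated form.

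The only delicate point is the case where $z'$ or $w'$ vanishes identically, which would give $Q\equiv0$ and make the divisor meaningless. A real curve that is not a point has $x'$ or $y'$ nonzero. Then $z'\not\equiv0$, and $w'\not\equiv0$ by conjugation, so $Q\not\equiv 0$. I will note this nondegeneracy explicitly; the rest is routine.
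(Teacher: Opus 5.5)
Your proposal is correct and follows the paper's proof essentially step for step: factor $q=dz\,dw$ with $w(t)=\overline{z(\bar t)}$, deduce conjugation invariance of the divisor and $\operatorname{ord}_\alpha(q)=2\operatorname{ord}_\alpha(dz)$ at real points, then read off the standard real factorization. Your explicit treatment of $t=\infty$ and your check that $x'^2+y'^2\not\equiv 0$ are small details the paper leaves implicit.
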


\begin{proof}
Set $z(t)=x(t)+iy(t)$ and $w(t)=x(t)-iy(t)$. Then $w(t)=\overline{z(\bar t)}$
and
\[
 q=dz\,dw.
\]
Thus the orders of $dw$ are obtained from the orders of $dz$ by complex
conjugation. Hence the divisor of $q$ is conjugation invariant. If
$t=\alpha$ is real, the orders of $dz$ and $dw$ at $\alpha$ coincide, so
\[
 \operatorname{ord}_{\alpha}(q)=2\operatorname{ord}_{\alpha}(dz).
\]
This proves the parity statement. The displayed factorization is just
the standard factorization of a real rational function into real linear
factors and irreducible real quadratic factors corresponding to
non-real conjugate pairs.
\end{proof}

Quadratic differentials corresponding to straight lines give the flat structure of the usual flat plane, with a unique pole of order four and no other singularities. The following elementary criterion is the form used later.

\begin{proposition}\label{prop:line-qminusfour-safe}
Let $\Gamma$ be a rational plane curve parametrized by two real rational functions $x(t)$ and $y(t)$, and put $z=x+iy$. Assume that the arc length quadratic differential belongs to $\Omega^{2}\mathcal{M}_{0}(-4)$ and that $dz$ has no non-real pole which is cancelled in the product $q=dz\,d\overline z$. Then $\Gamma$ is a straight line.
\end{proposition}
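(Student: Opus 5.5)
The plan is to work on $\mathbb{CP}^1$ with $z=x+iy$ and $w=\overline{z(\bar t)}$, so that $q=dz\,dw$. The argument has three steps: show that $dz$ has a single pole and no zeros, identify $z$ as an affine function of the parameter, and read off that $\Gamma$ is a line.

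\emph{Step 1: the divisor of $dz$.} Since $q\in\Omega^{2}\mathcal{M}_{0}(-4)$, $q$ has exactly one singularity, a pole of order $4$ at some point $p$, and no zeros. By Proposition~\ref{prop:rational-real-divisor} the divisor of $q$ is conjugation invariant, so $p$ is real. We may apply a real M\"obius change of parameter and assume $p=\infty$; then $q=c\,dt^2$ with $c$ a non-zero real constant.

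Next we control the divisors of the factors. We have $\operatorname{div}(dw)=\overline{\operatorname{div}(dz)}$ and
\[
\operatorname{div}(q)=\operatorname{div}(dz)+\overline{\operatorname{div}(dz)}=-4\cdot\infty .
\]
Take any point $r\neq\infty$ and set $m=\operatorname{ord}_r(dz)$. Then $m+\operatorname{ord}_{\bar r}(dz)=0$.
\begin{enumerate}
\item If $r$ is real, this gives $2m=0$, so $m=0$.
\item If $r$ is non-real and $m\neq 0$, then one of $r,\bar r$ is a pole of $dz$ that is cancelled by a zero of $dw$. This is exactly what the hypothesis forbids.
\end{enumerate}
Hence $dz$ is holomorphic and non-vanishing away from $\infty$. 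At $\infty$ the same relation gives $2\operatorname{ord}_\infty(dz)=-4$, so $\operatorname{ord}_\infty(dz)=-2$. Consistently, a $1$-form on $\mathbb{CP}^1$ has total degree $-2$.

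\emph{Step 2: integrating.} In the coordinate $t$, $dz=g(t)\,dt$ where $g$ is entire and has no zeros. The order of $dt$ at $\infty$ is already $-2$, so $g$ is also holomorphic and non-vanishing at $\infty$. Therefore $g$ is a non-zero constant $a$. Integrating gives $z=at+b$, and then $w=\bar a t+\bar b$ for real $t$.

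\emph{Step 3: conclusion.} From $z$ and $w$ we get $x=\operatorname{Re}(a)\,t+\operatorname{Re}(b)$ and $y=\operatorname{Im}(a)\,t+\operatorname{Im}(b)$. These are affine in $t$ with $(\operatorname{Re}a,\operatorname{Im}a)\neq 0$, so $\Gamma$ is a straight line. If $t$ was reparametrized by a M\"obius map in Step 1, the image curve is unchanged.

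The only delicate point is the non-real cancellation in Step 1. A priori $dz$ could have a pole at $r$ balanced by a zero of $dw$ at $r$, and such a cancellation would be invisible in $\operatorname{div}(q)$. The stated hypothesis is exactly what excludes this case.
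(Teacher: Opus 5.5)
Your proof is correct and follows the paper's argument: move the unique pole to $\infty$ by a real M\"obius change, use the no-cancellation hypothesis to see that $dz$ has no finite poles, and conclude from $\operatorname{ord}_\infty(dz)=-2$ that $z$ is affine in $t$. Your divisor bookkeeping is more explicit than the paper's, since you also justify two points the paper leaves implicit: why that pole is real, and why no hidden cancellation can occur at real finite points.
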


\begin{proof}
After a real M\"obius change of parameter, the unique pole of $q$ is at $t=\infty$. By the hypothesis on cancellations, $dz$ has no finite pole. Hence $z$ is a polynomial. Since $q$ has a pole of order four at infinity, $dz$ has a pole of order two at infinity, so $z$ has degree one. Therefore $x$ and $y$ are affine functions of the real parameter, and the image is a straight line.
\end{proof}

\begin{rem}
A tempting strengthening would be to characterize rational curves whose arc length differential has only real singularities. In this generality one has to control possible cancellations between $dz$ and $dw=d\overline z$ at non-real points. The clean statement used below is the following sufficient criterion: if, after normalization, all singularities of $dz$ itself lie on the real locus, then $dz=\lambda R(t)dt$ with $R$ real rational and $\lambda\in\mathbb C^*$; hence $x'(t)$ and $y'(t)$ are real constant multiples of the same rational function and the image is a line.
\end{rem}

\begin{lemma}\label{lem:real-singularities-even}
Let $X$ be a real algebraic curve with anti-holomorphic involution $p\mapsto\overline p$, and let $x,y$ be real meromorphic functions on $X$. Put
\[
 z=x+iy,\qquad w=x-iy,
\]
so that $w(p)=\overline{z(\overline p)}$, and let $q=dx^2+dy^2=dz\,dw$. Then the divisor of $q$ is invariant under the real structure. At a real point $p$, the order of $q$ is twice the order of $dz$; in particular every real singularity of $q$ has even order.
\end{lemma}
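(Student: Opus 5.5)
The plan is to follow the pattern of Proposition~\ref{prop:rational-real-divisor}, replacing the explicit parameter $t$ by local parameters on $X$. Write $\sigma\colon p\mapsto\overline p$ for the real structure. Since $x$ and $y$ are real, $\overline{x(\overline p)}=x(p)$ and $\overline{y(\overline p)}=y(p)$. Hence $w=\overline{z\circ\sigma}$, i.e.\ $dw=\sigma^*\overline{dz}$, and
\[
q=dx^2+dy^2=dz\,dw=dz\,\sigma^*\overline{dz}.
\]
This is exactly the splitting used in the proof of Theorem~\ref{thm:existence-criterion}.

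\textbf{Orders at conjugate points.} Fix $p\in X$ and a local coordinate $u$ at $p$. Then $v=\overline{u\circ\sigma}$ is a holomorphic local coordinate at $\overline p$. If $dz=g(u)\,du$ near $p$, then near $\overline p$ we have
\[
\sigma^*\overline{dz}=\overline{g(\overline{v})}\,dv .
\]
The function $\overline{g(\overline v)}$ vanishes, or has a pole, to the same order as $g$. Therefore
\[
\operatorname{ord}_{\overline p}(dw)=\operatorname{ord}_{p}(dz).
\]
Adding, we get
\[
\operatorname{ord}_p(q)=\operatorname{ord}_p(dz)+\operatorname{ord}_{\overline p}(dz),
\]
which is symmetric under $p\leftrightarrow\overline p$. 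So $\operatorname{div}(q)=D+\sigma(D)$ with $D=\operatorname{div}(dz)$, and this divisor is $\sigma$-invariant.

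\textbf{Real points.} At a real point $p=\overline p$ the formula above gives $\operatorname{ord}_p(q)=2\operatorname{ord}_p(dz)$. Consequently every zero or pole of $q$ on the real locus has even order.

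The only point needing care is the case where $dz\equiv0$ or $dw\equiv 0$, which would make orders meaningless. If $dz\equiv0$, then $z$ is constant. Reality then forces $w$ constant as well, so $x$ and $y$ are constant. In that case $q\equiv 0$ and there is nothing to prove, since we assume $q\neq 0$, as in Section~\ref{sec:which}. I expect no real obstacle beyond checking that $\overline{u\circ\sigma}$ is a legitimate holomorphic chart, which holds because $\sigma$ is anti-holomorphic.
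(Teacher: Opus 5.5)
Your proposal is correct and follows the same route as the paper: relate the orders of $dw$ to those of $dz$ via $w=\overline{z\circ\sigma}$, then add orders in $q=dz\,dw$ and specialize to $p=\overline p$. You simply make explicit the local-coordinate check (that $\overline{u\circ\sigma}$ is a holomorphic chart at $\overline p$) and the degenerate case $dz\equiv 0$, both of which the paper leaves implicit.
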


\begin{proof}
The relation $w(p)=\overline{z(\overline p)}$ implies that the orders of $dw$ are obtained from those of $dz$ by conjugation. Since $q=dz\,dw$, the divisor of $q$ is invariant under conjugation. If $p$ is real, then $dz$ and $dw$ have the same order at $p$, so
\[
 \operatorname{ord}_{p}(q)=\operatorname{ord}_{p}(dz)+\operatorname{ord}_{p}(dw)=2\operatorname{ord}_{p}(dz).
\]
This proves the claim.
\end{proof}

\section{Characterization of plane rectifiable curves}\label{sec:main}


The evolute of a planar curve is formed by the centers of its osculating circles. It is equivalently defined as the envelope of the normals of the curve (normal to the tangent line for the Euclidean structure).  The following result reformulates Humbert's theorem in the language of
the arc length quadratic differential.

\begin{theorem}\label{thm:involutes-exact}
Let $X$ be an irreducible algebraic plane curve and let
$\widetilde X$ be its normalization. Assume that $X$ is not a line
and work away from isotropic tangent directions and from the exceptional
points where the usual evolute-involute correspondence degenerates. Let
\[
q=dx^2+dy^2
\]
be the arc length quadratic differential on $\widetilde X$. Then the
following conditions are equivalent:
\begin{enumerate}
\item $X$ is algebraically rectifiable, i.e. the arc length integral
\[
S=\int \sqrt{q}
\]
is an algebraic multivalued function on $X$;
\item the canonical double cover of $q$ is exact;
\item one, equivalently every, non-constant involute of $X$ is an
algebraic curve;
\item $X$ is the evolute of an algebraic curve.
\end{enumerate}
\end{theorem}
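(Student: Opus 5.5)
The plan is to establish the equivalences in the order (1)$\Leftrightarrow$(2), then (1)$\Leftrightarrow$(3), and finally (3)$\Leftrightarrow$(4).

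\textbf{(1)$\Leftrightarrow$(2).} This is the case $k=2$, $\phi=q$ of Theorem~\ref{thm:exact-k}. Its conditions (1) and (2) are literally exactness of $\sqrt q$ on the canonical double cover and algebraicity of $\int q^{1/2}$. We only need to check that the double cover here is the one on which the arc length $S$ lives as a univalued-differential object. This holds because $ds$ is a branch of $\sqrt q$, so analytic continuation of $S$ over $\widetilde X$ is exactly continuation of a primitive of the canonical $1$-form.

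\textbf{(1)$\Leftrightarrow$(3).} Work locally on a branch with a local parameter $t$ away from isotropic directions. Write $\gamma=(x,y)$ and set $T=\gamma'/|\gamma'|$, where $|\gamma'|$ denotes the chosen branch of $\sqrt{x'^2+y'^2}$, so $ds=|\gamma'|\,dt$. The involutes are then
\[
\iota_c=\gamma-(S+c)\,T,\qquad c\in\bC.
\]
The components of $T$ are algebraic functions on $X$: they are rational in $x,y,x',y'$ and the square root defining the double cover. Hence if $S$ is algebraic, every $\iota_c$ is parametrized by algebraic functions of a point of $X$. Its image is therefore an algebraic curve, since it is the image of a compact curve (a finite cover of $\widetilde X$) under a meromorphic map. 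This is nonconstant because $X$ is not a line, so $\iota_c'=-(S+c)T'\neq0$ generically.

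Conversely, suppose some nonconstant involute $\iota_c$ is algebraic. Then $\iota_c-\gamma=-(S+c)T$ gives
\[
S+c=-\langle \iota_c-\gamma,\,T\rangle ,
\]
where $\langle\cdot,\cdot\rangle$ is the complex-bilinear pairing with $\langle T,T\rangle=1$. The point to justify is that the point of $\iota_c$ corresponding to a point of $X$ depends algebraically on it. I would argue this as follows: the tangent of the involute at $\iota_c(t)$ is the normal direction of $\gamma$ at $t$, so the correspondence is a component of the algebraic incidence ``$P\in\iota_c$, the normal to $\iota_c$ at $P$ is tangent to $X$ at $\gamma(t)$''. This correspondence is finite-to-finite, hence algebraic. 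Therefore $S$ is algebraic. Since $S+c'$ differs from $S+c$ by a constant, one algebraic involute forces all of them to be algebraic, which gives the ``equivalently every'' clause.

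\textbf{(3)$\Leftrightarrow$(4).} This is the classical evolute--involute duality. The evolute of $\iota_c$ is $\gamma$: the normals of $\iota_c$ are the tangent lines of $X$, and their envelope is $X$. This gives (3)$\Rightarrow$(4). Conversely, if $X$ is the evolute of an algebraic curve $Y$, then $Y$ is an involute of $X$, because a curve is an involute of its evolute (its points are $\gamma\pm$(radius of curvature)$\cdot$normal, with radius differential equal to $ds$ of the evolute). Then (3) applies. The envelope of normals of an algebraic curve is algebraic, as is any dual construction, so no extra care is needed there.

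\textbf{Main obstacle.} I expect the hardest step to be the converse direction (3)$\Rightarrow$(1), namely making rigorous that the involute's parametrization by $X$ is algebraic, and handling the excluded degenerate points: isotropic tangents, where $T$ blows up, and inflection or vertex points. I would handle this by working on a Zariski-open set and using that algebraicity of a multivalued function is a generic condition preserved under finite correspondences, as permitted by the hypotheses of the theorem.
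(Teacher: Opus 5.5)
Your proposal is correct and follows essentially the paper's route: (1)$\Leftrightarrow$(2) by Theorem~\ref{thm:exact-k} with $k=2$, (1)$\Leftrightarrow$(3) by extracting $S+c$ from the involute formula $\gamma-(S+c)T$, and (3)$\Leftrightarrow$(4) by evolute--involute duality; your pairing $\langle\cdot,T\rangle$ does the same job as the paper's division by a non-vanishing coordinate of $T$. Your incidence-correspondence argument, which shows that the point of an algebraic involute depends algebraically on the point of $X$, usefully makes explicit a step the paper takes for granted when it asserts that $r-I_C$ is algebraic.
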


\begin{proof}
Let $r=(x,y)$ be a local parametrization of $X$ on the normalization,
and let $S$ be a local primitive of the arc length form, so that
$dS^2=q$. On the canonical double cover of $q$ the unit tangent vector
\[
T=\frac{dr}{dS}
 =
\left(\frac{dx}{dS},\frac{dy}{dS}\right)
\]
has meromorphic coordinates. A local involute of $X$ is given by
\[
I_C=r-(S+C)T,
\]
where $C$ is a constant. If $S$ is algebraic, then all coordinates of
all involutes are algebraic. Conversely, suppose that one non-constant
involute $I_C$ is algebraic. Since $r$ and $T$ are algebraic on the
canonical cover, the vector $(S+C)T=r-I_C$ is algebraic. Away from the
isolated points where $T$ is undefined or zero, at least one coordinate
of $T$ is non-zero; division by that coordinate shows that $S+C$, and
therefore $S$, is algebraic. Thus algebraic rectifiability is equivalent
to the algebraicity of one, or equivalently all, non-constant involutes.

By Theorem~\ref{thm:exact-k} with $k=2$, algebraicity of the Abelian
integral $S=\int\sqrt{q}$ is equivalent to exactness of the canonical
one-form on the canonical double cover of $q$. This proves the
equivalence of (1), (2), and (3).

Finally, outside the standard exceptional locus, the evolute of an
involute of $X$ is $X$ itself. Conversely, if $X$ is the evolute of an
algebraic curve $Y$, then $Y$ is locally an involute of $X$. Hence the
algebraicity of an involute is equivalent to saying that $X$ is the
evolute of an algebraic curve. This is Humbert's geometric
characterization.
\end{proof}

\begin{remark}{\rm
Thus Theorem~\ref{thm:involutes-exact} is the differential-theoretic
form of Humbert's theorem quoted in Proposition~\ref{prop:Hu}. Humbert's
formulation uses evolutes, while the present formulation uses the
canonical cover of the quadratic differential $q=dx^2+dy^2$. The two
viewpoints are equivalent because passing from a curve to its involutes
amounts exactly to integrating the arc length form.}
\end{remark}

\medskip
Using our previous results, we describe algebraically rectifiable rational curves. (Our result generalizes the very special case settled in \cite{SaFa}).  We need to answer the question for which $R_1=U/W$ and $R_2=V/W$ there exists a rational function $f=P/Q$ such that
$$(R_1^\prime)^2+(R_2^\prime)^2=(f^\prime)^2/f.$$

\begin{theorem}\label{thm:two-singularities-circle}
Let $\gamma:\bCP^{1}\to \bCP^{2}$ be a non-constant rational real plane curve, and let
\[
q=dx^{2}+dy^{2}
\]
be its Euclidean arc length quadratic differential on the normalization. Assume that $q$ has exactly two singularities. If the image of $\gamma$ is not a line, then it is a circle. More precisely, after a real M\"obius change of parameter and a Euclidean similarity, the complex coordinate $z=x+iy$ has the form
\[
z(t)=z_{0}+c\left(\frac{t+i}{t-i}\right)^{m},
\]
where $c\neq 0$ and $m\neq 0$ is an integer. In particular, if the parametrization is birational onto its image, then $|m|=1$.
\end{theorem}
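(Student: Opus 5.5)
The plan is to use the factorization $q=dz\,dw$ with $w=\overline{z\circ\rho}$ (Theorem~\ref{thm:existence-criterion}) and to split into cases according to the position of the two singularities relative to the real structure. By Lemma~\ref{lem:real-singularities-even} the divisor of $q$ is conjugation invariant, real singularities have even order, and the total degree is $-4$. There are two possibilities. Either the two singularities are a non-real conjugate pair $p,\bar p$, each of order $-2$. Or both are real, of even orders $(a,-4-a)$ with $a\neq 0$; the admissible cases are $(-2,-2)$ and $(2k,-4-2k)$ with $k\geq 1$. (Two real points cannot be swapped by conjugation, and one real and one non-real singularity is impossible.) After a real M\"obius change of parameter I normalize $q$.
\begin{itemize}
\item In the conjugate-pair case, take $p=i$ and set $u=(t+i)/(t-i)$. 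Then $q=c\,(du/u)^2$ and $\rho$ becomes $u\mapsto 1/\bar u$.
\item In the real case, the singularities are $0$ and $\infty$. Then $q=c\,(du/u)^2$ when the orders are $(-2,-2)$, and $q=c\,t^{a}dt^2$ otherwise.
\end{itemize}
In both cases $q=\omega_0^2$ for an explicit model form $\omega_0$, namely $du/u$ or $t^{a/2}dt$.

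Next, write $dz=g\,\omega_0$ and $dw=h\,\omega_0$ with $g,h$ rational. Then $q=dz\,dw$ forces $gh=c$ to be constant, and the reality condition becomes $g\cdot\rho^*\overline{g}=c$. In the conjugate-pair case this means $g$ is, up to a constant, a ``Blaschke-type'' product
\[
g(u)=\lambda u^{m}\prod_j \frac{u-a_j}{1-\bar a_j u}.
\]
In the real case $g$ is a real rational function times a constant whose zeros and poles come in conjugate pairs that cancel in $g\bar g$. The key point, and what I expect to be the main obstacle, is to show there are no extra factors: that is, dz has no zeros or poles off the two singular points, which would be cancelled in $dz\,dw$ (the phenomenon flagged in the Remark after Proposition~\ref{prop:line-qminusfour-safe}).

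My first attempt at this step uses exactness. Every pole of the $1$-form $dz=g\,\omega_0$ away from the two singular points is a pole of $g$, located at some $1/\bar a_j$ (resp.\ at a non-real point). Since $dz$ is exact, such a pole must have order at least $2$ and zero residue. I would order the $a_j$ by modulus (or pick a pole of maximal multiplicity), compute the principal part of $g\,\omega_0$ there, and show that the residue cannot vanish. Failing that, I would restrict to the real locus. There $|dz/d\theta|$ is constant because $|g|=$ const on $|u|=1$. Moreover the tangent angle $\arg g$ has derivative $m+\sum_j P_{a_j}(\theta)$, a sum of Poisson kernels with a fixed sign. I would then compare the total turning of the closed curve $z(e^{i\theta})$ with the degree count from the pole structure to force all $a_j$ to be absent.

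Once $g=\lambda u^m$ (resp.\ $g=\lambda t^{b}$ with trivial extra factors), the remaining cases are routine.
\begin{itemize}
\item In the real case, all singularities of $dz$ are real, so by the Remark after Proposition~\ref{prop:line-qminusfour-safe} the image is a line. The subcase $(-2,-2)$ with $dz=\lambda\,du/u$ is excluded anyway, since it has nonzero residue and so is not exact.
\item In the conjugate-pair case, $dz=\lambda u^{m-1}du$ must be exact, which forces $m\neq 0$ (for $m=0$ the residue at $u=0$ is nonzero). Integrating gives $z=z_0+c\,u^{m}=z_0+c\bigl((t+i)/(t-i)\bigr)^m$. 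On the real line $|u|=1$, so the image is a circle traversed $|m|$ times.
\end{itemize}
Birationality onto the image forces the parametrization $t\mapsto u^m$ to have degree one, that is $|m|=1$.
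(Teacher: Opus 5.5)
\textbf{There is a genuine gap, and the step it sits in is false.} The gap is exactly where you put the main obstacle: excluding Blaschke factors in $g$, i.e.\ zeros and poles of $dz$ away from the two singular points that cancel in $q=dz\,dw$. Neither of your two strategies can close it.

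\textbf{Counterexample.} In the conjugate-pair case take $m=3$ and a double factor at $a=\sqrt2$:
\[
dz=\left(\frac{u(u-\sqrt2)}{\sqrt2\,u-1}\right)^{2}du,\qquad u=\frac{t+i}{t-i}.
\]
\begin{enumerate}
\item Its only finite pole is the double pole at $u=1/\sqrt2$. The residue there is $u(u-\sqrt2)(2u-\sqrt2)\big|_{u=1/\sqrt2}=0$. Explicitly, with $p=u-1/\sqrt2$, there is the rational primitive $z=\frac{p^{3}}{6}-\frac{p}{2}-\frac{1}{8p}$.
\item Since $z$ has real coefficients, $w(u)=\overline{z(1/\bar u)}=z(1/u)$. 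Since $z'(1/u)=1/z'(u)$, one gets
\[
dz\,dw=-\frac{du^{2}}{u^{2}}=\frac{4\,dt^{2}}{(t^{2}+1)^{2}}.
\]
This is the arc length differential of the unit circle, with exactly two singularities: double poles at $t=\pm i$.
\item The parametrization $x=\frac{z+w}{2}$, $y=\frac{z-w}{2i}$ is real and birational onto its image. A degree-two factorization is impossible because $z$ has a pole of order $3$ at $u=\infty$. A degree-four factorization would make $w$ a M\"obius function of $z$, forcing $z(0)=z(\sqrt2)$.
\item The image is not a circle. A relation $(z-z_0)(w-\bar z_0)\equiv R^2$ would force $w=\bar z_0$ at both poles $u=\infty$ and $u=1/\sqrt2$ of $z$. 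But $w(\infty)=z(0)=\sqrt2/3$ while $w(1/\sqrt2)=z(\sqrt2)=-\sqrt2/3$.
\item The image is not a line either, since $\operatorname{div}(dz)\neq\operatorname{div}(dw)$: at $u=\infty$, $dz$ has a pole of order $4$ and $dw$ a zero of order $2$.
\end{enumerate}

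\textbf{Why your two strategies fail.} Your first strategy fails because a double pole of $g\,\omega_0$ can have zero residue, as above. In the second, the Poisson kernels do not have a fixed sign: a factor with $|a_j|>1$ contributes $(1-|a_j|^2)/|e^{i\theta}-a_j|^2<0$. Here the total turning $m-2=1$ is perfectly consistent with a closed unit-speed curve, since $|dz/d\theta|=1$ on $|u|=1$.

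\textbf{Consequence for the statement and the paper.} The statement is false as written, and the paper's own proof has the same hole. It passes from $q=dz\,dw$ directly to $dz=\lambda(t+i)^a(t-i)^{-a-2}dt$, and to $dz=\lambda t^a\,dt$ in the real case. This ignores precisely the cancellations warned about in the remark after Proposition~\ref{prop:line-qminusfour-safe}. What can be proved is the theorem under the extra hypothesis that $\operatorname{div}(dz)$ is supported on the two singular points of $q$. Under that hypothesis your remaining steps are correct and agree with the paper's argument: the case split, exactness forcing $m\neq0$, integration to $z_0+c\,u^{m}$, and $|m|=1$ for birational parametrizations.
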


\begin{proof}
Put
\[
z=x+iy,\qquad w=x-iy .
\]
Then $q=dz\,dw$, and $w(t)=\overline{z(\bar t)}$. In particular, the divisor of $q$ is invariant under the real structure on the parameter sphere.

We first discuss the case in which the two singularities are real. After a real M\"obius change of parameter, they are $0$ and $\infty$. Since, at a real point, $dz$ and $dw$ have the same order, the orders of $q$ at both singularities are even. Thus
\[
q=Ct^{2a}dt^{2}
\]
for some integer $a$ and some $C\neq0$. Correspondingly, $dz=\lambda t^{a}dt$ after changing the non-zero constant $\lambda$. If $a=-1$, then $z$ has a logarithmic term, impossible for a rational parametrization. If $a\neq -1$, then
\[
z=z_{0}+ct^{a+1}.
\]
Since $w=\overline{z(\bar t)}$, the real parametrization is contained in the real affine line determined by the complex direction $c$. Hence in the real case the image is a line.

It remains to consider the case in which the two singularities are non-real conjugate points. Since the divisor of a quadratic differential on $\bCP^1$ has degree $-4$, the two conjugate singularities have the same order and therefore both are double poles. After a real M\"obius change of parameter, they are $i$ and $-i$, and
\[
q=C\frac{dt^{2}}{(t^{2}+1)^{2}}
\]
for some $C\neq0$. Since $q=dz\,dw$, there is an integer $a$ and non-zero constants $\lambda,\mu$ such that
\[
dz=\lambda\frac{(t+i)^{a}}{(t-i)^{a+2}}dt,\qquad
 dw=\mu\frac{(t-i)^{a}}{(t+i)^{a+2}}dt .
\]
Set
\[
s=\frac{t+i}{t-i}.
\]
Then $s$ maps the real line to the unit circle and
\[
dz=-\frac{\lambda}{2i}s^{a}ds .
\]
The case $a=-1$ would give a logarithm, so it is impossible for a rational parametrization. Therefore
\[
z=z_{0}-\frac{\lambda}{2i(a+1)}s^{a+1}.
\]
For real $t$ one has $|s|=1$, so the image is contained in a Euclidean circle. Since the curve is irreducible and non-constant, its image is this circle. Taking $m=a+1$ gives the asserted normal form. Finally, a birational parametrization of the circle has degree one, so $|m|=1$.
\end{proof}

The simplest quadratic differentials with exact cover are of the form $\lambda t^{a}dt^{2}$. Such differentials can be obtained as arc length quadratic differentials of some plane curve only in the case $a=0$ and $a=-2$. This defines respectively the straight line and the circle. In the second case, the quadratic differential does not have exact cover.

\section{Examples}\label{sec:examp}

The examples below are meant to illustrate distinct phenomena rather than to
provide a long catalogue.  Conics give the first non-trivial strata of the
arc-length quadratic differential; the Bernoulli lemniscate gives a finite-area
``pillowcase'' differential; the semicubic parabola gives a classical
rectifiable singular curve; and the last two rational examples show,
respectively, a higher degree Pythagorean-hodograph curve and a more generic
rational curve with several simple zeros of the length differential.

\subsection{Conics}

Let $X\simeq\mathbb{CP}^{1}$ be a non-degenerate conic, parametrized by two
meromorphic functions $f$ and $g$.  The Euclidean arc-length quadratic
differential on the normalization is
\[
        q=df^{2}+dg^{2}.
\]
For a general conic this differential is not exact on its canonical double
cover; the circle is the exceptional rectifiable case.

\subsubsection{Hyperbolas}

Put
\[
        f(t)=a\frac{1+t^{2}}{2t},\qquad
        g(t)=b\frac{1-t^{2}}{2t}.
\]
Then $(f/a)^2-(g/b)^2=1$, and
\[
        q=\frac{a^{2}(t^{2}-1)^{2}+b^{2}(t^{2}+1)^{2}}{4t^{4}}\,dt^{2}.
\]
Thus $q$ has poles of order four at $0$ and $\infty$.  Its zeros are the four
simple roots of
\[
        t^{4}+2\frac{b^{2}-a^{2}}{a^{2}+b^{2}}t^{2}+1=0,
\]
namely
\[
        t=\pm\frac{a+bi}{\sqrt{a^{2}+b^{2}}},\qquad
        t=\pm\frac{a-bi}{\sqrt{a^{2}+b^{2}}}.
\]
Consequently a non-circular hyperbola belongs to
$\Omega^{2}\mathcal{M}_{0}(1,1,1,1,-4,-4)$.

\subsubsection{Ellipses}

Parametrize an ellipse by
\[
        f(t)=a\frac{2t}{1+t^{2}},\qquad
        g(t)=b\frac{1-t^{2}}{1+t^{2}}.
\]
Then $(f/a)^2+(g/b)^2=1$, and
\[
        q=\frac{4a^{2}(1-t^{2})^{2}+16b^{2}t^{2}}{(1+t^{2})^{4}}\,dt^{2}.
\]
If $a=b$, the ellipse is a circle and
\[
        q=\frac{4a^{2}}{(1+t^{2})^{2}}\,dt^{2}
          =\left(\frac{2a}{1+t^{2}}\,dt\right)^{2}.
\]
It belongs to $\Omega^{2}\mathcal{M}_{0}(-2,-2)$.  The square root has
non-zero periods around the two poles, so the corresponding arc-length
integral is logarithmic on the complex normalization, although the real circle
has the expected elementary length parameter.

If $a\ne b$, then $q$ has two poles of order four at $t=\pm i$.  Its zeros are
the roots of
\[
        t^{4}+\frac{4b^{2}-2a^{2}}{a^{2}}t^{2}+1=0.
\]
For a non-circular ellipse these four roots are simple, and the differential
again lies in $\Omega^{2}\mathcal{M}_{0}(1,1,1,1,-4,-4)$.

\subsubsection{Parabolas}

For the parabola $f^{2}=ag$ we use the parametrization
\[
        f(t)=t,\qquad g(t)=\frac{t^{2}}{a}.
\]
Then
\[
        q=\frac{4t^{2}+a^{2}}{a^{2}}\,dt^{2}.
\]
It has two simple zeros at $t=\pm ai/2$ and one pole of order six at infinity;
hence it belongs to $\Omega^{2}\mathcal{M}_{0}(1,1,-6)$.

\subsection{Rational curves beyond conics}

\subsubsection{The Bernoulli lemniscate}

The Bernoulli lemniscate
\[
        (x^{2}+y^{2})^{2}=x^{2}-y^{2}
\]
is parametrized by
\[
        x(t)=\frac{t+t^{3}}{1+t^{4}},\qquad
        y(t)=\frac{t-t^{3}}{1+t^{4}}.
\]
A direct calculation gives
\[
        q=dx^{2}+dy^{2}=\frac{2}{t^{4}+1}\,dt^{2}.
\]
Thus $q$ has four simple poles and no zeros; it belongs to
$\Omega^{2}\mathcal{M}_{0}(-1,-1,-1,-1)$.  The associated flat surface is the
standard pillowcase-type surface.

\subsubsection{The semicubic parabola}

The semicubic parabola
\[
        ay^{2}=x^{3}
\]
is parametrized by
\[
        x(t)=at^{2},\qquad y(t)=at^{3}.
\]
Its arc-length quadratic differential is
\[
        q=a^{2}t^{2}(4+9t^{2})\,dt^{2}.
\]
It belongs to $\Omega^{2}\mathcal{M}_{0}(2,1,1,-8)$.  Moreover $q$ is exact in
the sense of Theorem~\ref{thm:exact-k}, since
\[
        q=f\left(\frac{df}{f}\right)^{2},\qquad
        f(t)=\frac{a^{2}}{2916}(4+9t^{2})^{3}.
\]
This is the classical singular algebraically rectifiable example.

\subsubsection{A Pythagorean-hodograph quintic}

Let
\[
        u(t)=t^{2}-1,\qquad v(t)=2t,
\]
and define a real polynomial curve by
\[
        x'(t)=u(t)^{2}-v(t)^{2}=t^{4}-6t^{2}+1,
        \qquad
        y'(t)=2u(t)v(t)=4t^{3}-4t.
\]
After integration one obtains
\[
        x(t)=\frac{t^{5}}5-2t^{3}+t,
        \qquad
        y(t)=t^{4}-2t^{2}.
\]
Then
\[
        q=dx^{2}+dy^{2}=(u^{2}+v^{2})^{2}\,dt^{2}=(t^{2}+1)^{4}\,dt^{2}.
\]
Thus the curve is algebraically rectifiable: the arc length form is
$(t^{2}+1)^{2}dt$, whose primitive is rational.  The differential belongs to
\[
        \Omega^{2}\mathcal{M}_{0}(4,4,-12),
\]
with zeros of order four at $t=\pm i$ and a pole of order twelve at infinity.
This example is useful because it is smooth as a parametrized polynomial curve
and rectifiable for a reason different from the elementary semicubic parabola:
it comes from a non-linear Pythagorean-hodograph identity.

\subsubsection{A rational curve with many zeros}

Consider the rational curve
\[
        x(t)=\frac{t^{3}}{(t^{2}+1)^{2}},\qquad
        y(t)=\frac{3t^{2}+1}{(t^{2}+1)^{2}}.
\]
A direct calculation gives
\[
        q=\frac{t^{2}(t^{6}+30t^{4}-15t^{2}+4)}{(t^{2}+1)^{6}}\,dt^{2}.
\]
The points $t=\pm i$ are poles of order six, $t=0$ is a zero of order two,
and, provided the sextic factor has simple roots, the remaining six zeros are
simple.  Hence this example lies in
\[
        \Omega^{2}\mathcal{M}_{0}(2,1,1,1,1,1,1,-6,-6).
\]
It is included to show the more typical behaviour of rational plane curves:
the length differential may have several zeros and poles and need not be the
square of a rational one-form.

\section{Affine differential geometry of plane curves and affine evolute}\label{sec:aff}

In Euclidean geometry, the osculating circle at a point of a plane curve is the unique circle making second order contact (limit of the circle passing through three infinitesimally close points). This definition can be extended to more general objects of affine geometry.

\begin{definition}
For a point $x$ of a plane curve $\Gamma$, the \textit{hyperosculating parabola} of $\Gamma$ at $x$ is the unique parabola making third order contact with $\Gamma$ at $x$. Similarly, the \textit{hyperosculating conic} of $\Gamma$ at $x$ is the unique conic making fourth order contact.
\end{definition}

\subsection{Hyperosculating parabola}

For a curve $\Gamma$, between any pair of points $x$ and $y$, there is a triangle whose vertices are $x$, $y$ and the intersection of the tangent lines of $\Gamma$ at $x$ and $y$. The area of this triangle is invariant by the action of the special-affine group.

There is a unique parabola passing through $x$ and $y$ with the same tangent lines as $\Gamma$ at these two points. This provides an equivalent definition of hyperosculating parabola.

Infinitesimally close points $x$ and $y$ define an infinitesimal triangle whose area is a special-affine local invariant. For this purpose, curve $\Gamma$ can be locally identified with its hyperosculating parabola.

For a parabola parametrized by $\gamma(t)$, the exterior product $\gamma'(t) \wedge \gamma''(t)$ is constant to a bivector $A$ (moment of a central force in classical mechanics). The area of the triangle drawn by two points of parameters $t_{1}$ and $t_{2}$ is (after some calculations) $\frac{(t_{2}-t_{1})^{3}A}{8}$.

As a consequence, the third root of this area is additive for parameter $t$ and can serve as an affine arc length on the parabola.

By identifying locally any smooth curve with its hyperosculating parabola, we get an arc length function by integrating the third root of $\gamma'(t) \wedge \gamma''(t)dt^{3}$. Just like the Euclidean structure of the plane induces a quadratic differential, the special-affine structure of the plane induces a cubic differential.

The cubic differential induced by the special-affine structure is an invariant of a plane embedding. As in the Euclidean case, however, one should not claim uniqueness of the embedding from this differential alone without specifying the appropriate affine curvature data. We therefore use the cubic differential below as a computable invariant rather than as a complete invariant.

\subsection{Affine evolute}

At every point $x$ of a planar curve $\Gamma$, the direction of the \textit{affine normal line} to $\Gamma$ at $x$ is the direction of the directrix of the hyperosculating parabola of $\Gamma$ at $x$.

\begin{definition}
The \textit{affine evolute} of a smooth plane curve $\Gamma$ is the envelope of its affine normal lines. Equivalently, in the classical affine differential geometry formulation, it is the curve traced by the centers of the hyperosculating conics of $\Gamma$.
\end{definition}

The center of a conic is a well-defined affine notion. For an ellipse it is the intersection point of any diameter. For an hyperbola it is the intersection of the two asymptotic lines.

\begin{example}
The affine evolute of any ellipse or hyperbola is its center. The affine evolute of a parabola is a point at infinity.
\end{example}

The first interesting cases begin with a cubic curve.

\begin{proposition}
The affine evolute of an algebraic plane curve is algebraic, wherever it is defined.
\end{proposition}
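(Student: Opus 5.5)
The plan is to write the affine evolute explicitly as a rational expression in a parametrization and its derivatives, and then use the fact that derivatives of algebraic functions are algebraic. Work locally on the normalization $\widetilde\Gamma$ with a local parameter $t$ and a parametrization $\gamma(t)=(x(t),y(t))$, where $x,y$ are meromorphic functions on $\widetilde\Gamma$. For any rational function $h$ on $\widetilde\Gamma$, the derivative $dh/dt$ is $dh/dx$ multiplied by the rational function $dx/dt$. Taking $t=x$ away from the finitely many critical points, every derivative $\gamma^{(j)}$ therefore has rational coordinates on $\widetilde\Gamma$.

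The key step is a closed formula for the centre of the hyperosculating conic. First, impose on the general conic $Q(X,Y)=aX^2+2bXY+cY^2+2dX+2eY+f$ the five linear conditions $Q(\gamma)=\frac{d}{dt}Q(\gamma)=\dots=\frac{d^4}{dt^4}Q(\gamma)=0$. Their coefficients are polynomials in $\gamma,\gamma',\dots,\gamma^{(4)}$. By Cramer's rule, the coefficients $(a:b:\dots:f)$ are the $5\times5$ minors of this $5\times 6$ matrix, so they are polynomials in these derivatives. Next, the centre solves $aX+bY+d=0$ and $bX+cY+e=0$, which gives $X=(be-cd)/(ac-b^2)$ and $Y=(bd-ae)/(ac-b^2)$. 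Thus the centre is a rational function of $\gamma,\dots,\gamma^{(4)}$, and hence a rational function on $\widetilde\Gamma$.

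Alternatively, one can use the affine description. Let $\sigma$ be the affine arc length, so that $d\sigma^3=(\gamma'\wedge\gamma'')dt^3$. The affine normal is $\gamma_{\sigma\sigma}$ and the evolute is $\gamma+\gamma_{\sigma\sigma}/\mu$, where $\mu$ is the affine curvature. Writing $\kappa=(\gamma'\wedge\gamma'')^{1/3}$, each of $\gamma_\sigma=\gamma'/\kappa$, $\gamma_{\sigma\sigma}$ and $\mu=\gamma_{\sigma\sigma}\wedge\gamma_{\sigma\sigma\sigma}$ is rational in $\gamma^{(j)}$ and $\kappa$. Checking weights, the evolute point turns out to involve only $\kappa^3$, so it is rational without any cube roots. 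I would mention this as a consistency check, but rely on the conic-centre formula.

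The conclusion then follows. The map $\widetilde\Gamma\dashrightarrow\bC^2$ sending a point to its evolute point is a rational map from a compact Riemann surface. Its image is therefore either a point or an irreducible algebraic curve, and it is a point exactly in the cases of the Example. The phrase ``wherever it is defined'' excludes the finitely many points where the conic is degenerate or has no centre, that is, where $ac-b^2=0$ or all the minors vanish: inflection points, where $\gamma'\wedge\gamma''=0$, and sextactic-type degeneracies. The main obstacle is making sure the five conditions have rank $5$ generically, so that the hyperosculating conic is well-defined and unique. This fails identically only when $\Gamma$ is itself a conic or a line, and in that case the evolute is a point, which is also algebraic. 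I would check the rank condition by noting that, for a non-conic curve, the vanishing of all minors is an algebraic condition on $\widetilde\Gamma$ that cannot hold identically.
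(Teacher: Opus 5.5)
Your proposal is correct and follows essentially the paper's route: you express the centre of the hyperosculating conic as a rational function of a finite jet on the normalization, then use that the image of a rational map from a compact curve is algebraic; you simply write out the Cramer's-rule and centre formulas that the paper leaves implicit. One small correction, which does not affect the conclusion: the five contact conditions drop rank identically only for lines, since for a non-degenerate conic B\'ezout forces the hyperosculating conic to be the curve itself, and for a parabola it is $ac-b^2$ that vanishes identically, placing the centre at infinity.
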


\begin{proof}
On the normalization of the curve, the affine tangent, affine normal, and the center of the hyperosculating conic are obtained from a finite jet of a rational parametrization by algebraic operations and differentiation. Hence the corresponding point of the affine evolute is given by rational functions on the normalization, away from the exceptional locus where the affine normal construction degenerates. The image of a rational map from an algebraic curve is algebraic.
\end{proof}

In both cases, the vector between a point of the evolute and the corresponding point of an involute is colinear to the tangent vector. Therefore, this correspondence is birational if and only some quantity is rational. This quantity is the ratio between the arc length and a square root.

\subsection{Cubic differential for rational parametrizations}

For a real algebraic curve parametrized by $OM(t)=(f(t),g(t))$, cubic differential $OM'(t) \wedge OM''(t)dt^{3}$ is a special affine invariant, see \cite{COT}.

For straight lines, the cubic differential is zero.

For a parabola, the cubic differential is $adt^{3}$. It has a unique pole of order six.

For an ellipse parametrized by $f(t)=a\frac{2t}{1+t^{2}}$ and $g(t)=b\frac{1-t^{2}}{1+t^{2}}$, the cubic differential is $\frac{-8ab}{(1+t^{2})^{3}}dt^{3}$. The associated flat surface is an infinite cylinder, just like the flat surface associated by the quadratic differential induced on the circle by the ambient Euclidean structure.

For a hyperbola parametrized by $f(t)=a\frac{1+t^{2}}{2t}$ and $g(t)=b\frac{1-t^{2}}{2t}$, the cubic differential is $\frac{ab}{t^{3}}dt^{3}$. The associated flat surface is also an infinite cylinder. It is the same as the ellipse up to a rotation.

For the semicubic parabola parametrized by $f(t)=at^{2}$ and $g(t)=at^{3}$, cubic differential is $6a^{2}t^{2}dt^{3}$. It is far simpler than the quadratic differential of Euclidean geometry. It is a cubic differential with a zero of order two and a pole of order eight. In particular, it has exact cover.

\begin{proposition}\label{prop:affine-involutes-exact}
Let $X$ be the normalization of a non-linear real algebraic plane curve and let
\[
\Phi=(\gamma'(t)\wedge\gamma''(t))dt^{3}
\]
be the cubic differential induced by the special-affine structure of the plane.
Assume that the affine involutes are understood in the classical special-affine
sense on the open set where the affine normal construction is non-degenerate.
Then the affine involutes are algebraic if and only if the cubic differential
$\Phi$ has exact canonical cover.
\end{proposition}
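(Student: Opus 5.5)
We mirror the proof of Theorem~\ref{thm:involutes-exact}, with the Euclidean frame replaced by the special-affine frame and Theorem~\ref{thm:exact-k} applied with $k=3$.

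\emph{Affine frame on the canonical cover.} Let $\gamma$ be a local parametrization of $X$ and let $\sigma$ be a local primitive of $\Phi^{1/3}$, so that $d\sigma=(\gamma'\wedge\gamma'')^{1/3}dt$ is the affine arc length. On the canonical $3$-cover $X'$ of $\Phi$, the form $\Phi^{1/3}$ is single valued and meromorphic. Consequently the affine tangent $T=d\gamma/d\sigma=\gamma'/(\gamma'\wedge\gamma'')^{1/3}$ and the affine normal $N=dT/d\sigma$ are meromorphic vector-valued functions on $X'$. Indeed, both are obtained from a finite jet of $\gamma$ by algebraic operations and by the single cube root that becomes univalued on $X'$. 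By construction they satisfy $T\wedge N=1$. The affine curvature $k$ (defined by $dN/d\sigma=-kT$) is then also meromorphic on $X'$. The affine evolute is the curve $\gamma+k^{-1}N$, and it is algebraic, consistent with the previous proposition.

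\emph{Involutes.} Next we write down the classical special-affine involute. For an affine evolute $E$ and its involute $Y$, the text above observes that the vector joining corresponding points is collinear with the tangent vector of the curve. The affine analogue of the string construction $I_C=r-(S+C)T$ is
\[
I_C=\gamma-(\sigma+C)\,T ,
\]
up to the normalization fixed by the convention that $X$ is the affine evolute of $I_C$. This has to be checked by differentiating. We have $dI_C/d\sigma=-(\sigma+C)N$, so $I_C$ has tangent direction $N$ and $X$ is recovered as the envelope of the corresponding affine normals.

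\emph{Equivalence.} If $\Phi$ has exact canonical cover, then by Theorem~\ref{thm:exact-k} the integral $\sigma=\int\Phi^{1/3}$ is a meromorphic function on $X'$. Hence every $I_C$ has coordinates that are meromorphic on the compact curve $X'$, and so every $I_C$ is algebraic. Conversely, suppose one non-constant involute $I_C$ is algebraic. Then $(\sigma+C)T=\gamma-I_C$ is algebraic. Since $X$ is non-linear, $\gamma'\wedge\gamma''\not\equiv0$, so $T$ is a non-zero algebraic vector, and at least one of its coordinates is not identically zero. Dividing by that coordinate shows that $\sigma$ is algebraic. Theorem~\ref{thm:exact-k} (ii)$\Rightarrow$(i) then gives exactness of $\Phi^{1/3}$ on $X'$.

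\emph{Main obstacle.} The delicate step is the involute formula itself: identifying correctly the classical special-affine involute, and showing that, on the non-degenerate open set, it depends on the curve only through $\gamma$, $T$ and $\sigma$ up to an additive constant. This is needed so that algebraicity of the involute reduces exactly to algebraicity of $\sigma$. If the classical normalization instead involves a function of $\sigma$, such as $\sigma^2/2$ multiplying $N$ together with lower-order terms, we would rerun the division argument on the coefficient that is linear or polynomial in $\sigma$. Algebraicity of a non-constant polynomial expression in $\sigma$ with algebraic coefficients still forces algebraicity of $\sigma$, so the conclusion is unchanged.
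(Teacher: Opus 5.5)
Your overall route matches the paper's. You reduce algebraicity of the affine involutes to algebraicity of $\sigma=\int\Phi^{1/3}$, then apply Theorem~\ref{thm:exact-k} with $k=3$. The paper simply asserts the first reduction. You try to justify it with the explicit formula $I_C=\gamma-(\sigma+C)T$, and your check of that formula is wrong.

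The identity $dI_C/d\sigma=-(\sigma+C)N$ only fixes the \emph{tangent} direction of $I_C$. In the Euclidean proof that is enough, because the normal is determined by the tangent. The affine normal, however, depends on the $3$-jet, so it does not follow that the affine normals of $I_C$ are the tangent lines of $X$. Put $\tau=\sigma+C$ and $Y=I_C$. Then $Y_\sigma=-\tau N$, $Y_{\sigma\sigma}=-N+\tau kT$ and $Y_\sigma\wedge Y_{\sigma\sigma}=\tau^2k$. Hence the affine normal of $Y$ is parallel to
\[
Y_{\sigma\sigma}-\frac{(\tau^2k)'}{3\tau^2k}\,Y_\sigma=\tau k\,T+\frac13\Bigl(\frac{\tau k'}{k}-1\Bigr)N,
\]
which points along $T$ only if $k$ is a constant multiple of $\sigma+C$. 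For a concrete case, $Y=\{(t,t^3)\}$ has affine evolute $X=\{(\tfrac85t,-8t^3)\}$. Normalizing $\sigma(0)=0$ on $X$, one finds $k=-\tfrac{5}{16}\sigma^{-2}$ and $Y=\gamma-\tfrac{\sigma}{2}T$. This is not of the form $\gamma-(\sigma+C)T$ for any $C$.

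The fallback in your last paragraph does not close the gap either. Let $Y$ be any curve with affine evolute $X$, and let $\rho=1/k_Y$ be its affine radius of curvature, viewed as a function of $\sigma$ (primes denote $d/d\sigma$). From $X=Y+\rho N_Y$ one gets $N_Y=T/\rho'$ and $Y=\gamma-\frac{\rho}{\rho'}T$. The normalizations $T_Y\wedge N_Y=1$ and $N_Y=dT_Y/du$ then amount to the nonlinear third-order equation
\[
\rho'^3\bigl(\rho'''+k\rho'\bigr)=\rho .
\]
So $Y$ depends on $\sigma$ through a solution of an ODE whose coefficient $k$ is itself a function of $\sigma$. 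It is not given by an explicit algebraic or polynomial expression in $\sigma$ and the jet of $\gamma$, so dividing by a coordinate of $T$ has nothing to act on.

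What your argument does prove is the equivalence for the curves $\gamma-(\sigma+C)T$. If that is what ``classical affine involute'' is meant to be, your division argument is fine, but the sentence claiming $X$ is their affine evolute must be dropped. To obtain the proposition for curves whose affine evolute is $X$, you would have to control algebraicity of solutions of the displayed ODE. Neither your proposal nor the paper's one-line reduction supplies that step.
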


\begin{proof}
On the smooth non-degenerate locus, the affine arc length is obtained by
integrating a local branch of $\Phi^{1/3}$. The affine normal, affine evolute
and corresponding affine involutes are obtained from a finite jet of the
parametrization by algebraic operations, except for this affine arc length
parameter. Therefore the coordinates of the affine involutes are algebraic over
the function field of $X$ precisely when the Abelian integral
\[
\int \Phi^{1/3}
\]
is an algebraic multivalued function on $X$.

By Theorem~\ref{thm:exact-k}, applied with $k=3$, this condition is equivalent
to exactness of the canonical one-form on the canonical cubic cover of $X$
associated with $\Phi$.
\end{proof}

\begin{example}
Bernoulli lemniscate, defined by equation $(x^{2}+y^{2})^{2}=x^{2}-y^{2}$ is parametrized by $x(t)=\frac{t+t^{3}}{1+t^{4}}$ and $y(t)=\frac{t-t^{3}}{1+t^{4}}$. The arc length quadratic differential simplifies to $\frac{2dt^{2}}{t^{4}+1}$. This is an example of pillowcase flat surface. It belongs to $\Omega^{2}\mathcal{M}_{0}(-1,-1,-1,-1)$.

From an affine point of view, the cubic differential simplifies to $\frac{-12tdt^{3}}{(t^{4}+1)^{2}}$. Such cubic differential belongs to $\Omega^{3}\mathcal{M}_{0}(1,1,-2,-2,-2,-2)$. Zero and infinity are simple zeros while the fourth roots of $-1$ are double poles. The associated flat surface has finite area.
\end{example}

\begin{example}
Singular cubic curve, defined by equation $y^{2}=x^{2}+x^{3}$ is parametrized by $x(t)=t^{2}-1$ and $y(t)=t^{3}-t$. The arc length quadratic differential is $(9t^{4}-2t^{2}+1)dt^{2}$. It belongs to $\Omega^{2}\mathcal{M}_{0}(1,1,1,1,-8)$.

From an affine point of view, the cubic differential is $(6t^{2}+1)dt^{3}$. Such cubic differential belongs to $\Omega^{3}\mathcal{M}_{0}(1,1,-8)$.
\end{example}

\begin{example}
For any real rational function $R$, real algebraic curve defined by equation $y=R(x)$ is parametrized by $x(t)=t$ and $y(t)=R(t)$. The arc length quadratic differential is $(1+R'^{2})dt^{2}$. However, the affine cubic differential is $R''(t)dt^{3}$. Any second derivative of a real rational function can appear in that way. In particular, for a generic rational function $R$, zeros of the cubic differential are simple and its poles are of order three (with nonzero cubical residues).
\end{example}

\section{Appendix. Pl\"ucker formulas for generic (rational) curves and their evolutes}

The next information is either standard or can be easily derived from the results of Theorem 54 of  \cite{Coo}.

\begin{lemma}\label{lm:rat}  {\rm (1)} A generic plane projective curve $\ga\subset \bC P^2$ of degree $d$ has genus $\binom {d-1}{2}$, class  $d^\prime= d(d-1)$ (i.e. the degree of the dual curve $\ga^\ast\subset (\bC P^2)^\ast$) and  $\sharp_i=3d(d-2)$ inflection points.  The dual curve $\ga^\ast$ has $\sharp^\prime_{cusps}=3d(d-2)$ cusps, no inflection points and $\sharp^\prime_{nodes}=\binom{d(d-1)-1}{2}-\binom{d-1}{2}$ nodes.

\noindent
{\rm (2)} A generic plane rational projective curve $\ga\subset \bC P^2$ of degree $d$ is nodal with $\sharp_{nodes}=\binom{d-1}{2}$ nodes and $\sharp_i=3(d-2)$ inflection points. The class $d^\prime$ of such curve  equals $2d-2$. The dual curve $\ga^\ast$ has $\sharp^\prime_{cusps}=3(d-2)$ cusps, no inflection points and $\sharp^\prime_{nodes}=2(d-2)(d-3)$ nodes.
\end{lemma}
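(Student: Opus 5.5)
The statement is Lemma~\ref{lm:rat}: the Pl\"ucker data for generic curves and for generic rational curves. I would derive everything from the Pl\"ucker formulas together with the genus formula. For a reduced irreducible plane curve of degree $d$ with only nodes and ordinary cusps, and with $\delta$ nodes, $\kappa$ cusps and geometric genus $g$, I would use
\[
d'=d(d-1)-2\delta-3\kappa,\qquad \iota=3d(d-2)-6\delta-8\kappa,\qquad g=\tbinom{d-1}{2}-\delta-\kappa .
\]
The dual versions are obtained by swapping $d\leftrightarrow d'$, $\delta\leftrightarrow$ bitangents and $\kappa\leftrightarrow\iota$. Genus is preserved under duality.

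\textbf{Part (1).} A generic curve is smooth, so $\delta=\kappa=0$. This immediately gives $g=\binom{d-1}{2}$, $d'=d(d-1)$ and $\iota=3d(d-2)$. For the dual, I would use two classical genericity facts. First, a generic smooth curve has only ordinary (simple) flexes and ordinary bitangents, with no hyperflexes and no tritangents. Second, under duality ordinary flexes of $\ga$ correspond to ordinary cusps of $\ga^\ast$ and bitangents to nodes. So $\ga^\ast$ has $3d(d-2)$ cusps, and since $\ga$ is smooth (no cusps), $\ga^\ast$ has no flexes. The node count of $\ga^\ast$ then follows from the genus formula applied to $\ga^\ast$, which has degree $d(d-1)$ and the same genus:
\[
\sharp'_{nodes}=\tbinom{d(d-1)-1}{2}-\tbinom{d-1}{2}-3d(d-2).
\]
I notice that the stated formula omits the $-3d(d-2)$ cusp term. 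Before finalizing, I would check this against $d=3$: the dual of a smooth cubic is a sextic with 9 cusps, genus 1 and no nodes, and $\binom{5}{2}-1-9=0$. This confirms that the cusp term must be subtracted. I would therefore prove the corrected count, or read the statement's ``nodes'' as double points counted with cusps. This bookkeeping is the main point to get right in part (1).

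\textbf{Part (2).} A generic rational curve is the image of a generic map $\bCP^1\to\bCP^2$ of degree $d$. I would argue by a dimension count on the space of parametrizations that its image has only ordinary nodes and no cusps. A cusp requires $U',V',W'$ to be proportional to $U,V,W$ at some $t$, which is a codimension-2 condition in a one-parameter family, so it generically fails. Then $g=0$ forces $\delta=\binom{d-1}{2}$. Substituting $\kappa=0$ gives
\[
d'=d(d-1)-(d-1)(d-2)=2d-2,\qquad \iota=3d(d-2)-3(d-1)(d-2)=3(d-2).
\]
The flex count can also be checked directly as the degree of the Wronskian of $(U,V,W)$, namely $3(d-2)$.

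For the dual of the rational curve, the same genericity argument applies: flexes are ordinary, with no tritangents or hyperflexes. Hence $\ga^\ast$ has $3(d-2)$ cusps and no flexes, since $\ga$ has no cusps. Its nodes come from the genus formula for a degree-$(2d-2)$ rational curve:
\[
\sharp'_{nodes}=\tbinom{2d-3}{2}-3(d-2)=(2d-3)(d-2)-3(d-2)=2(d-2)(d-3),
\]
which matches the statement.

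The main obstacle is justifying the genericity claims rigorously: that all flexes are ordinary and that there are no tritangents or tangents at a flex. I would handle these by standard incidence-variety dimension counts, citing \cite{Coo} for the classical verification, and by checking that each of these exceptional configurations imposes at least one extra condition on the parameter space.
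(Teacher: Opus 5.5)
Your argument is correct and follows the paper's own route. For (2) the paper likewise uses genericity to get a nodal curve with $\binom{d-1}{2}$ nodes, then the Pl\"ucker formulas for the class $2d-2$ and the $3(d-2)$ flexes, then the genus formula for $\ga^\ast$; for (1) it only says ``standard'', and your smooth-case Pl\"ucker computation supplies exactly that.

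You are also right that the node count for $\ga^\ast$ in (1) is misstated. The expression $\binom{d(d-1)-1}{2}-\binom{d-1}{2}$ is the total $\delta$-invariant of $\ga^\ast$, and the number of nodes is this minus the $3d(d-2)$ cusps, i.e.\ $\tfrac12 d(d-2)(d-3)(d+3)$, the classical bitangent count. For example, for $d=4$ it gives $28$ rather than the stated $52$. This is consistent with the paper's own treatment of part (2), where the cusps are subtracted.
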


\begin{proof} Item (1) is standard. To settle item (2), notice that a generic plane rational curve $\gamma$ of degree $d$ is nodal, this is a standard genericity statement. Since the genus of $\gamma$ is $0$ its number of nodes equals its expected genus which for a curve of degree $d$ equals $\binom {d-1}{2}$. The number $\sharp_i$ of inflection points of $\gamma$ equals the expected number of inflection points minus $6\sharp_{nodes}$. The expected number of inflection points equals $3d(d-2)$. Thus $\sharp_i$ of  $\gamma$  equals $3d(d-2)-3(d-1)(d-2)=3d-6$.
 Furthermore, for a plane curve of degree $d$ with cusps and nodes only, its class $d^\prime$ equals $d(d-1)-2\sharp_{nodes} -3\sharp_{cusps}$. Therefore in our case, $d^\prime= d(d-1)-(d-1)(d-2)=2d-2$. Since a generic rational curve $\gamma$ has no cusps its dual $\gamma^\prime$ has no inflection points. The dual $\gamma^\prime$ has $\sharp^\prime_{cusps}=3(d-2)$ cusps and genus $0$. Therefore its number $\sharp^\prime_{nodes}$  of nodes equals $\binom{2d-3}{2}-3d+6=\frac{(2d-3)(2d-4)}{2}-3d+6=2d^2-10d+12=2(d-2)(d-3)$.
 \end{proof}

  \begin{lemma} {\rm (1)}  For a generic plane curve $\ga$ of degree $d$,  its evolute $E_\ga\subset \bC P^2$  is a plane curve of degree $d_E=3d(d-1)$ and class $d^\prime_E=d^2$. $E_\ga$ has no inflection points,
  $\sharp_{cusps}^E= d(6d-9)$ cusps and $\sharp^E_{nodes}=\frac{d (3d-5)(3d^2-d-6)}{2}$ nodes.   The number $\sharp_{nodes}^N$ of nodes of $\tilde N_\Ga^\bC$  equals $\binom{d^2-1}{2} - \binom{d-1}{2}=\frac{d(d-1) (d^2+d-3)}{2}$.  There are no  cusps on $\tilde N_\Ga^\bC$   (since $\tilde E_\Ga$ has no inflection points).

 \noindent
  {\rm (2)}
  For a generic rational curve $\ga$ of degree $d$, its evolute $E_\ga\subset \bC P^2$  is a rational curve of degree $d_E=6(d-1)$ and class $d^\prime_E=3d-2$. $E_\ga$ has no inflection points,  $\sharp_{cusps}^E=3(3d-4)$ cusps and $\sharp^E_{nodes}=2(3d-4)(3d-5)$ nodes. The  curve of normals $N_\ga\subset (\bC P^2)^\ast$ (i.e. the dual curve to the evolute $E_\ga$)  has no cusps, $\binom {3d-3}{2}$ nodes and $3(3d-4)$ inflection points.

\end{lemma}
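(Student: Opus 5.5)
The evolute statement will be proved by realizing the evolute through the curve of normals and reading off the invariants from the Plücker formulas of Lemma~\ref{lm:rat}. The evolute $E_\ga$ is the envelope of the normal lines of $\ga$. Hence its dual curve is the curve of normals $N_\ga\subset(\bC P^2)^\ast$, the image of the normalization $\widetilde\ga$ under $p\mapsto$ (normal line at $p$). In particular $E_\ga$ and $N_\ga$ are birational to $\widetilde\ga$, which gives genus $0$ in case (2) and genus $\binom{d-1}{2}$ in case (1).

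\textbf{Step 1: the class of $E_\ga$.} The class $d'_E$ equals $\deg N_\ga$, i.e.\ the number of normals of $\ga$ through a generic point $P$. This is the classical count of feet of perpendiculars. The normal at $p$ passes through $P=(a,b)$ iff $(x-a)P_y-(y-b)P_x=0$. This is a curve of degree $d$ through $I$ and $J$, intersecting $\ga$ in $d^2$ points. For generic $\ga$ none of these intersections lies at infinity, since $\ga$ avoids $I,J$; so $d'_E=d^2$.

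In the rational case I will instead parametrize. The normal line has coordinates $(x',y',-(xx'+yy'))$. With $x=U/W$, $y=V/W$ one clears denominators and removes common factors. Concretely, $x'=(U'W-UW')/W^2$, and after multiplying by $W^3$ one gets polynomials of degree $\le 3d-2$ with no common factor generically. This gives $d'_E=3d-2$.

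\textbf{Step 2: degree and cusps.} The degree $d_E$ counts tangents from a generic point to $E_\ga$, i.e.\ points of $N_\ga$ on a generic line of the dual plane. It is computed by the Plücker class formula applied to $N_\ga$, once we know its nodes and cusps. Cusps of $N_\ga$ correspond to inflections of $E_\ga$. I claim there are none: a stationary point of the normal map would require a stationary normal direction, which happens only at degenerate places excluded by genericity. Hence $E_\ga$ has no inflections.

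Cusps of $E_\ga$ correspond to inflections of $N_\ga$. These are the vertices of $\ga$, i.e.\ the zeros of the curvature derivative $\kappa'$, together with contributions from the isotropic tangency points and the points at infinity. In the rational case one counts these via the Wronskian of the normal parametrization, $\sharp_i(N)=3(\deg N-2)$ for a cusp-free rational curve. This gives $3(3d-4)$, matching the statement. The Plücker formula $d_E=d'_E(d'_E-1)-2\sharp_{nodes}(N)$, with $N$ rational and nodal, so that $\sharp_{nodes}(N)=\binom{3d-3}{2}$, then yields $d_E=(3d-2)(3d-3)-(3d-3)(3d-4)=2(3d-3)=6(d-1)$.

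\textbf{Step 3: nodes.} Finally $\sharp^E_{nodes}$ follows from the genus formula for $E_\ga$: genus $=\binom{d_E-1}{2}-\sharp_{nodes}-\sharp_{cusps}$. The general case (1) runs identically, using genus $\binom{d-1}{2}$ in place of $0$ and the class formula for a curve of higher genus.

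The main obstacle is justifying genericity: that $N_\ga$ is nodal with no cusps, and that the parametrization of the normal has no common factor, so that no degree drops. I would argue by exhibiting one explicit curve with the expected invariants and invoking openness, or by citing Coolidge's Theorem~54 as the paper indicates.
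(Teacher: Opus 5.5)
Your outline is correct in structure, but it is organized differently from the paper's proof. The paper takes three facts directly from Coolidge's Theorem~54: $d_E=3d+\sharp_i$, $d'_E=d+d'$, and that $E_\ga$ has no inflections. Only afterwards does it apply the genus and Pl\"ucker formulas to $N_\ga$, and it writes this out only in the rational case. You instead compute $d'_E=\deg N_\ga$ directly, by the B\'ezout count of feet of normals or by the degree $3d-2$ of the cleared normal parametrization. You then recover $d_E$ as the class of $N_\ga$, so the only remaining geometric input is that $N_\ga$ has no cusps.

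This is more self-contained, and it covers part (1) as well. With $g$ the genus, a cusp-free curve satisfies: the class of $N_\ga$ equals $2\deg N_\ga+2g-2$, and $\sharp_i(N_\ga)=3(\deg N_\ga-2)+6g$. These give $3d(d-1)$ and $d(6d-9)$ in the generic case. In the rational case they give $2(3d-2)-2=6(d-1)$ and your Wronskian count $3(3d-4)$. Neither uses anything about the singularities of $N_\ga$ beyond the absence of cusps. The paper's route is shorter but delegates every genericity statement to the citation.

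If you do not simply cite Coolidge for cusp-freeness, your own justification is wrong as stated. A stationary normal direction is not a degenerate phenomenon: it occurs at every inflection of $\ga$, and a generic curve has $3d(d-2)$ (resp.\ $3(d-2)$) of them. What has to be checked is that the point $(x',y',-(xx'+yy'))$ of the dual plane is not stationary. At an inflection, $r''=\lambda r'$, and its derivative is
\[
\lambda\,\bigl(x',y',-(xx'+yy')\bigr)+\bigl(0,0,-(x'^2+y'^2)\bigr),
\]
which is proportional to the point only when the tangent is isotropic.

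You must also examine the branches over infinity, where every normal is the line at infinity. For $x=a/u+\dots$, $y=b/u+\dots$ the cleared branch is $\bigl(au+O(u^3),\,bu+O(u^3),\,-(a^2+b^2)+O(u)\bigr)$, which is smooth. The same computation shows that $N_\ga$ has an ordinary $d$-fold point at the point of $(\bC P^2)^\ast$ corresponding to the line at infinity. So for $d\ge 3$ the curve $N_\ga$ is not literally nodal, and no openness argument can prove that it is. What the genus and class formulas actually need is that its singularities are ordinary, with total $\delta$-invariant $\binom{3d-3}{2}$ (resp.\ $\binom{d^2-1}{2}-\binom{d-1}{2}$).

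Two minor slips. First, the number of tangents to $E_\ga$ from a generic point is its class, not its degree; your formula does correctly compute $d_E$ as the class of $N_\ga$. Second, the curve $(x-a)P_y-(y-b)P_x=0$ does not pass through $I,J$ unless $\ga$ does. What you need, and what is true, is that it misses the non-isotropic points of $\ga$ at infinity.
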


\begin{proof} A generic rational curve $\ga$ satisfies the assumptions of Theorem 54 of  \cite{Coo}. Thus, by Theorem 54,  the degree $d_E$ of the evolute $E_\ga$ is given by $d_E=3d+\sharp_i=3d+3(d-2)=6(d-1)$.  The class $d^\prime_E$ of $E_\ga$ equals $d+d^\prime=d+2d-2=3d-2$.   Thus the curve of normals $N_\ga$ has degree $3d-2$ and genus $0$. Since, by Theorem 54, the evolute $E_\ga$ has no inflection points the curve $N_\ga$ has no cusps. Thus $N_\Ga$ has $\sharp^N_{nodes}=\binom {3d-3}{2}$ nodes. Finally, its number $\sharp^N_{i}$ of inflection points equals $3(3d-2)(3d-4)-6 \sharp^N_{nodes}=9d-12$.
\end{proof}

\smallskip
\noindent
\emph{Acknowledgements.} We are sincerely grateful to Professor E.~Shustin for his interest in this project and discussions. The second author is supported by the Beijing Natural Science Foundation (Grant IS23005) and the French National Research Agency under the project TIGerS (ANR-24-CE40-3604).

\end{document}